\documentclass{article}

\usepackage{mySymbols}
\usepackage{myPackages}
\geometry{a4paper, top=25mm, left=25mm, right=25mm, bottom=30mm}

\title{The Real Vanishing Ideals of Nuclear $p$-Norm Balls}
\author{
  Ghislain Fourier\thanks{RWTH Aachen University, Germany. \texttt{fourier@art.rwth-aachen.de}} 
  \and 
  Yuhuai Zhou\thanks{RWTH Aachen University, Germany. \texttt{zhou@art.rwth-aachen.de}}
}
\date{}

\begin{document}

\maketitle

\begin{abstract}
We study the algebraic and geometric structure related to tensor nuclear norms. 
We show that the unit ball of the nuclear norm is the convex hull of an irreducible real variety and give an explicit description of its real vanishing ideal. 
As a consequence, we obtain a simple criterion to decide when a primary ideal is prime, 
and we use it to prove that the ideal of the nuclear $2$-norm is real reduced and prime.
\end{abstract}

\renewcommand{\thefootnote}{}
\footnote{
\textbf{Keywords.}
  real algebraic geometry, prime ideal, nuclear norm.
}

\footnote{
\textbf{MSC 2020.}
    14P05,
    13P25
}

\section{Introduction}

The nuclear norm is a well-known and powerful tool in low-rank matrix recovery; 
see \cite{RauhutFoucart2013CompressiveSensing}, \cite{ChandrasekaranRecjtParrilo2012ConvexGeometryRecovery}, and \cite{Tropp2015ConvexRecovery}. 
It is natural to extend this idea to low-rank tensor recovery. 
Several authors have applied the nuclear norm to tensor completion problems; 
see \cite{YuanZhang2016TensorCompletion} and \cite{GandyRechtYamada2011TensorCompletion}. 
However, computing the nuclear norm for general tensors is NP-hard \cite{HillarLim2013NPTensor}. 
For this reason, different relaxations have been considered, 
for example \cite{RauhutStojanac2021ThetaNuclear}, \cite{RoehrichZhou2025ThetaNuclear}, and \cite{BarakMoitra2022SOSTensorCompletion}. 
Further background on tensor nuclear norms can be found in \cite{FriedlandLim2018NuclearNorm} and \cite{Derksen2016tOrthogonal}.

Our results go in two directions.  
First, we refine the algorithm of Krick and Logar by giving a simple criterion to decide when an ideal is prime.  
Second, we apply this to the family of nuclear $p$-norm ideals $I_p$, and show that for $p\in\{0,1,2,\infty\}$ these ideals are real radical.  
In particular, we prove that $I_2$ is real reduced and prime.

Let $d\in\N_+$ be the order of a tensor, and $\nbf = (n_1,\dots,n_d)\in\N_+^d$ its size.  
For $n\in\N_+$, we write $[n]=\{1,\dots,n\}$, and $[\nbf]=[n_1]\times\cdots\times[n_d]$ for the index set.

The works \cite{RauhutStojanac2021ThetaNuclear} and \cite{RoehrichZhou2025ThetaNuclear} 
describe the unit ball of the nuclear norm as the convex hull of a real algebraic variety,
\begin{equation}\label{eq: intro-nuclear-ideal}
    B_2 = \conv(\Vcal_\R(I_2)), \qquad 
    I_2 = \langle \sum_{a\in[\nbf]} x_a^2 - 1,\; \text{rank-1 binomials} \rangle.
\end{equation}
The explicit form of the rank-1 binomials will be recalled later.  
For general even numbers $p=2s$, this extends to
\begin{equation}\label{eq: intro-nuclear-p-ideal}
    B_{2s} = \conv(\Vcal_\R(I_{2s})), \qquad 
    I_{2s} = \langle \sum_{a\in[\nbf]} x_a^{2s} - 1,\; \text{rank-1 binomials} \rangle.
\end{equation}
Moreover, \cite{RoehrichZhou2025ThetaNuclear} gives similar descriptions for $p=1$ and $p=\infty$:
\begin{equation}\label{eq: intro-nuclear-1-ideal}
    I_1 = \langle \sum_{a\in[\nbf]} x_a^2 - 1,\, x_a^3 - x_a,\, x_a x_b \; (a\neq b)\rangle,
    \qquad
    I_\infty = \langle x_a^2 - 1\; (a\in[\nbf]),\, \text{rank-1 binomials}\rangle.
\end{equation}

These constructions are consistent with the general definition of nuclear $p$-norms.  
For $p=1$, the result is simply the $\ell_1$-norm (see \cite{FriedlandLim2018NuclearNorm, ChenLi2020NuclearPNorm, RoehrichZhou2025ThetaNuclear}).  
Throughout the paper, by the nuclear norm we always mean the nuclear $2$-norm.  
For completeness, we also introduce the ideal
\begin{equation}\label{eq: intro-rank-1-ideal}
    I_0 := \langle \text{rank-1 binomials} \rangle,
\end{equation}
which defines the Segre variety.  
Although the convex hull of $\Vcal_\R(I_0)$ is not a proper norm, the ideal $I_0$ remains important.  
It describes all rank-one tensors, and its reducedness will be used later to show that $I_2$ is real reduced.

This work provides a first systematic study of tensor nuclear $p$-norms from the viewpoint of real algebraic geometry.  
We believe that such algebraic information gives new insight into convex relaxations, such as the theta norms introduced in \cite{GouveiaParriloThomas2010ThetaBody}.  
We prove that $I_p$ is real reduced for $p=0,1,2,\infty$, hence equal to the real vanishing ideal of its real variety.  
In addition, $I_2$ is irreducible and prime.  
For general even $p=2s$, one can verify with computer algebra systems such as OSCAR \cite{OSCAR} that $I_{2s}$ is prime; in that case, our arguments also apply to show that $I_{2s}$ is real reduced.

Our first main result gives a practical criterion for deciding if an ideal is prime.

\begin{theorem}\label{thm: intro-primary2prime}
Let $\Kbb$ be a perfect field and $I\subset\Kbb[x_1,\dots,x_n] =: \Kbb[X]$ an ideal.  
Let $\Jcal\subseteq[n]$ be a maximal algebraically independent subset in $\Kbb[X]/I$, and let $k=\C(\Jcal)$ be the field of rational functions in $\Kbb[\Jcal]:= \Kbb[x_a: a\in\Jcal]$.  
If for every $a\notin\Jcal$ there exists $H_a\in I\cap\Kbb[\Jcal\cup\{a\}]$ such that $H_a(x_a)\in k[x_a]$ is square-free, then $I$ is prime.
\end{theorem}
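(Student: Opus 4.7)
The plan is to pass to the generic fiber over $k = \Kbb(\Jcal)$, show that the extended ideal is maximal there, and then contract back to conclude primality of $I$. Let $S := \Kbb[\Jcal] \setminus \{0\}$, $R_S := S^{-1}\Kbb[X] = k[x_a : a \notin \Jcal]$, and $I^e := I R_S$. Algebraic independence of $\Jcal$ modulo $I$ gives $I \cap S = \emptyset$, so $I^e$ is proper, and the witnesses $H_a$ together with maximality of $\Jcal$ force $I^e$ to be zero-dimensional in $R_S$. Under the primary hypothesis on $I$ implicit in this ``primary-to-prime'' criterion (as signaled in the abstract), $I^e$ is primary with maximal radical $\mathfrak{m}$, so $B := R_S/I^e$ is an Artinian local $k$-algebra with residue field $L := B/\mathfrak{m}$.

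The heart of the argument is to show $B$ is a field. For each $a \notin \Jcal$, let $\mu_a \in k[t]$ be the minimal polynomial of the image of $\bar{x}_a$ in $L$; it is irreducible and divides $H_a$. Square-freeness of $H_a$ then yields a coprime factorization $H_a = \mu_a g_a$ with $\gcd(\mu_a, g_a) = 1$ in $k[t]$. Consequently $g_a(\bar x_a)$ is a unit in $L$, and since $\mathfrak{m}/I^e$ is nilpotent, already a unit in $B$; the relation $\mu_a(\bar x_a)\, g_a(\bar x_a) = H_a(\bar x_a) = 0$ therefore forces $\mu_a(\bar x_a) = 0$ in $B$. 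Hence the $k$-subalgebra of $B$ generated by $\bar x_a$ is the field $k[t]/(\mu_a)$, and $B$ is a quotient of the tensor product $\bigotimes_{a \notin \Jcal} k[t]/(\mu_a)$. Using perfectness of $\Kbb$ to guarantee separability of each $\mu_a$ over $k$, this tensor product is \'etale over $k$, hence a finite product of field extensions of $k$. Any quotient of a product of fields is itself a product of fields, so $B$ is a product of fields; being Artinian local, it must be a single field, and $I^e = \mathfrak{m}$ is maximal.

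Finally, the contraction $(I^e)^c$ is prime in $\Kbb[X]$ and equals $I$: primariness of $I$ combined with $S \cap \sqrt{I} = \emptyset$ (which follows from $\Jcal$ staying algebraically independent modulo $\sqrt{I}$) makes $I$ saturated with respect to $S$. The main obstacle I anticipate is the separability step: perfectness of $\Kbb$ does not in general transfer to $k = \Kbb(\Jcal)$ in positive characteristic, so the passage from square-freeness of $H_a \in k[x_a]$ to separability of its irreducible factors $\mu_a$ is not automatic. This is where I expect the proof to exploit the fact that $H_a$ lies in $\Kbb[\Jcal \cup \{a\}]$, invoking perfectness of $\Kbb$ directly to rule out inseparable components.
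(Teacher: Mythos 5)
Your proposal is essentially correct and, at its core, follows the same strategy as the paper's proof of this result (Theorem~\ref{thm: primary-2-prime}): localize at $S=\Kbb[\Jcal]\setminus\{0\}$ to get a zero-dimensional extension $I^e\subseteq k[x_a:a\notin\Jcal]$, show the fiber ring is reduced, and contract back to $I$ using primariness together with $I\cap\Kbb[\Jcal]=\{0\}$ (your saturation step $f p_0\in I\Rightarrow f\in I$ is exactly the paper's). Where you differ is in how reducedness of the fiber is obtained: the paper simply invokes Seidenberg's lemma from the coprimality of $H_a$ and $H_a'$ and then transfers radicality back to $I$ (radical $+$ primary $=$ prime), whereas you prove by hand that $B=R_S/I^e$ is a field via the minimal polynomials $\mu_a$ and the \'etale tensor product $\bigotimes_a k[t]/(\mu_a)$ --- in effect an inline proof of Seidenberg's lemma in the Artinian local case, which buys you the slightly stronger conclusion that $I^e$ is maximal.

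Both of the caveats you flag are genuine and are exactly how the statement in the introduction diverges from the version the paper actually proves. First, the primary hypothesis is indeed indispensable (e.g.\ $I=(x_1x_2)$ with $\Jcal=\{1\}$ and $H_2=x_1x_2$ satisfies the stated hypotheses but is not prime); the body version explicitly assumes $I$ primary. Second, your separability worry is well founded: $k=\Kbb(\Jcal)$ need not be perfect in positive characteristic, and square-freeness of $H_a$ in $k[x_a]$ does not force separability of $\mu_a$ (consider $H_a=x_a^p-x_1$ over $\F_p(x_1)$), nor does membership of $H_a$ in $\Kbb[\Jcal\cup\{a\}]$ rescue this. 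The paper sidesteps the issue by assuming in Theorem~\ref{thm: primary-2-prime} that $H_a$ and $H_a'$ are coprime in $k[x_a]$ (strong square-freeness over $k$), which is precisely what both Seidenberg's lemma and your \'etale argument need; in the paper's applications $\Kbb$ has characteristic zero, so nothing is lost. With that corrected hypothesis your proof closes completely.
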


We then apply this result to the nuclear $2$-norm ideal.

\begin{theorem}\label{thm: intro-realprime}
The unit ball of the nuclear $2$-norm is the convex hull of an irreducible real variety 
defined by the real reduced prime ideal $I_2$ in \eqref{eq: intro-nuclear-ideal}.  
For $p\in\{0,1,\infty\}$, the corresponding ideals $I_p$ in \eqref{eq: intro-nuclear-1-ideal} are also real reduced.
\end{theorem}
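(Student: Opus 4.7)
The plan is to first establish the primality of $I_2$ by applying Theorem~\ref{thm: intro-primary2prime}, then to deduce real reducedness from the existence of a smooth real point of the complex variety. The convex-hull identity $B_2 = \conv(\Vcal_\R(I_2))$ is recalled from \cite{RauhutStojanac2021ThetaNuclear, RoehrichZhou2025ThetaNuclear}. Write $x_0 := x_{(1,\dots,1)}$ and $y_i^{(j)} := x_{(1,\dots,1,j,1,\dots,1)}$ with $j$ in position $i$ (so $y_i^{(1)} = x_0$), and take $\Jcal = \{y_i^{(j)} : i \in [d],\, j \in [n_i] \setminus \{1\}\}$, of size $\sum_i n_i - d$, matching the expected Krull dimension of $R/I_2$. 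The central ingredient is a ``master polynomial'' obtained by multiplying $\sum_a x_a^2 - 1$ by $x_0^{2(d-1)}$ and reducing modulo the rank-1 binomials, using $x_0^{2(d-1)} x_a^2 \equiv \bigl(\prod_i y_i^{(a_i)}\bigr)^2 \pmod{I_0}$:
\[
    H \;:=\; \prod_{i=1}^d \sigma_i \;-\; x_0^{2(d-1)} \;\in\; I_2, \qquad \sigma_i \;:=\; \sum_{j \in [n_i]}\bigl(y_i^{(j)}\bigr)^2.
\]

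For $a = (1,\dots,1) \notin \Jcal$, set $H_a := H$. The substitution $t = x_0^2$ turns this into $P(t) = \prod_i(t + \tilde\sigma_i) - t^{d-1}$, where $\tilde\sigma_i := \sigma_i - x_0^2$ involves only the $i$-th block of $\Jcal$-variables; these blocks are pairwise disjoint, so $\tilde\sigma_1,\dots,\tilde\sigma_d$ are algebraically independent over $\R$, and both $\mathrm{disc}_t(P)$ and $P(0) = \prod_i\tilde\sigma_i$ are non-vanishing in $k := \R(\Jcal)$, so $H$ is square-free in $k[x_0]$. For a non-axis $a$ with $k(a) := |\{i : a_i \neq 1\}| \geq 2$, combine $H$ with the rank-1 binomial $f_a := x_a\, x_0^{k(a)-1} - \prod_{i: a_i \neq 1} y_i^{(a_i)} \in I_0$ and take $H_a := \mathrm{Res}_{x_0}(f_a, H) \in I_2 \cap \R[\Jcal \cup \{a\}]$; the analogous discriminant analysis, tracked through the resultant's symmetric-function structure, will give square-freeness of $H_a$ in $x_a$ over $k$. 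Theorem~\ref{thm: intro-primary2prime} then yields the primality of $I_2$.

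Real reducedness of $I_2$ will follow by exhibiting a smooth real point of $\Vcal_\C(I_2\C[X])$ on $\Vcal_\R(I_2)$: at $T = e_1 \otimes \cdots \otimes e_1$ the Segre tangent space has dimension $\sum n_i - d + 1$, and the sphere constraint cuts it transversally to dimension $\sum n_i - d = \dim R/I_2$, so the standard criterion (a prime ideal of $\R[X]$ with a smooth real point of the complexified variety is real) applies. For the remaining ideals the arguments are more direct: $I_0$ is the classical Segre ideal, which is prime, and its real variety is the image of the parametrization $(u_1,\dots,u_d) \mapsto u_1 \otimes \cdots \otimes u_d$, hence Zariski-dense in the irreducible complex Segre variety, giving real reducedness. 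For $I_1$ and $I_\infty$ the complex and real varieties coincide (they are the finite sets $\{\pm e_a : a \in [\nbf]\}$ and $\{\pm u_1 \otimes \cdots \otimes u_d : u_i \in \{\pm 1\}^{n_i}\}$, respectively), and a pointwise local computation at each maximal ideal shows the quotient ring is a field, so both ideals are reduced.

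The hardest step will be the uniform square-freeness verification for the non-axis resultants $H_a$. While the resultant construction automatically places $H_a$ in $I_2 \cap \R[\Jcal \cup \{a\}]$ and the master polynomial $H$ handles the axis case transparently, the discriminant of a generic non-axis $H_a$ in $x_a$ is a much more intricate polynomial in the axis variables, and proving its non-vanishing in $\R(\Jcal)$ uniformly over all non-axis $a$ — rather than case-by-case — is the principal technical obstacle.
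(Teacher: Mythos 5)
Your overall architecture matches the paper's: an elimination-polynomial criterion for primality of $I_2$, a smooth real point at $e_1\otimes\cdots\otimes e_1$ to pass from prime to real radical, a parametrization/density argument for $I_0$, and zero-dimensionality for $p=1,\infty$. But the $I_2$ part has a genuine gap: the criterion you invoke requires $I_2$ to be \emph{primary}, and you never establish this. (The introduction's phrasing of Theorem~\ref{thm: intro-primary2prime} omits the hypothesis, but the version actually proved, Theorem~\ref{thm: primary-2-prime}, assumes it, and without it the statement is false: $I=\langle xy\rangle\subset\Kbb[x,y]$ with $\Jcal=\{x\}$ and $H_y=xy$ satisfies every remaining hypothesis yet is not prime.) The paper devotes Section~\ref{section: I2-primary} to exactly this point: $\Vcal_\C(I_{2,\C})$ is a single orbit of the connected group $\SOgroup(n_1,\C)\times\cdots\times\SOgroup(n_d,\C)$, and a transitive, connected invariant variety cut out by an invariant ideal has a unique associated prime. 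Some such argument must precede your resultant machinery.

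Two further points. First, your choice of $\Jcal$ (all off-axis coordinates, excluding $x_0=x_{(1,\dots,1)}$) is what makes your non-axis square-freeness check so hard — and you concede it is not carried out. The paper instead puts $x_{(1,\dots,1)}$ \emph{into} $\Jcal$ and drops one axis coordinate $x_{(1,\dots,1,n_d)}$ to keep the count at $\sum_i n_i-d$; then for every $a$ with at least two non-unit entries and $a_d\neq n_d$ the relation $x_{1\cdots1}^{\,d-1}x_a\equiv g_a(x)$ is \emph{linear} in $x_a$ with nonzero leading coefficient in $\R[\Jcal]$, hence trivially square-free, and only the slice $a_d=n_d$ needs a degree-$2s$ polynomial $f\,x_a^{2s}-g$, whose coprimality with its derivative is immediate. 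Second, you never prove that your $\Jcal$ is algebraically independent with respect to $I_2$; matching the expected dimension is not an argument (the paper's Proposition~\ref{prop: alg.ind.set} shows the projection of the variety onto the $\Jcal$-coordinates contains a Euclidean open set). The parts for $p=0,1,\infty$ are workable, though the "local ring is a field" claim for $p=1,\infty$ is precisely what Seidenberg's lemma delivers and should be verified rather than asserted.
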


These results establish a clear link between convex geometry and real algebraic geometry in the study of tensor nuclear norms.  
They also provide algebraic tools that can be used for further developments, such as theta-body relaxations and related convex formulations.

\paragraph{Structure of the paper.}
Section~\ref{section: preliminary} fixes notation and recalls background on (real) radicals, complexification, and algebraic independence.
In Section~\ref{section: p-0-1-infty} we take a first look at the cases where $p \in \set{0,1,\infty}$ and prove that such ideals $I_p$ are real radical.
In Section~\ref{section: primary-2-prime} we prove Theorem~\ref{thm: intro-primary2prime}, which provides a criterion for showing that a primary ideal is in fact prime over perfect base fields.
Section~\ref{section: real prime} applies this framework to the nuclear $p$-norm ideals for even numbers $p=2s$.
We show that $I_2$ is a real reduced prime.
For even $p=2s$, assuming that $I_{2s}$ is primary (which can be verified \emph{a priori} for fixed tensor sizes in a computer algebra system such as OSCAR), our method further proves that $I_{2s}$ is prime.

\numberwithin{theorem}{section}
\numberwithin{equation}{section}

\section{Preliminaries and Notation}\label{section: preliminary}

\subsection{Vanishing Ideals and the Nullstellensatz}\label{section: pre-nullstellensatz}

Let $\Kbb$ be a field.

\begin{definition}
    Let $I \subseteq \Kbb[X]$ be an ideal. 
    Its (complex) radical is defined by
    \[
        \sqrt{I} := \set{f \in \Kbb[X] : f^r \in I \text{ for some } r > 0}.
    \]
    When $\Kbb = \R$, the \emph{real radical} of $I$ is
    \[
        \Rradical{I} := 
        \set{
            f \in \R[X] :
            f^{2r} + \sum_{i=1}^l g_i^2 \in I
            \text{ for some } r > 0 \text{ and } g_i \in \R[X]
        }.
    \]
    We say that $I$ is \emph{reduced} (or \emph{radical}) if $I = \sqrt{I}$,
    and \emph{real reduced} (or \emph{real radical}) if $I = \Rradical{I}$.
    The ideal $I$ is called \emph{real} if $-1$ cannot be expressed
    as a sum of squares modulo~$I$.
\end{definition}

For a subset $V \subset \Kbb^n$, we denote by $I(V) \subset \Kbb[X]$
the ideal of all polynomials that vanish on $V$.

\begin{theorem}[Nullstellensatz]
    If $\Kbb = \C$, then
    \[
        I(\Vcal_\C(I)) = \sqrt{I}
        \qquad\text{and}\qquad
        \sqrt{\set{0}} = \set{0}.
    \]
    If $\Kbb = \R$, then
    \[
        I(\Vcal_\R(I)) = \Rradical{I}
        \qquad\text{and}\qquad
        \Rradical{\set{0}} = \set{0}.
    \]
    Moreover, an ideal $I \subseteq \R[X]$ is real if and only if
    its real variety $\Vcal_\R(I)$ is nonempty.
\end{theorem}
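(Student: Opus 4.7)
The plan is to handle the two Nullstellensätze in parallel — each equality has a trivial direction and a deep direction — and then derive the ``real ideal'' characterization as a short corollary.

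For the complex case, the inclusion $\sqrt{I} \subseteq I(\Vcal_\C(I))$ is immediate: if $f^r \in I$ then $f^r$ vanishes on $\Vcal_\C(I)$, and hence so does $f$. The reverse direction is Hilbert's Nullstellensatz, which I would deduce from the weak form (every proper ideal of $\C[X]$ has a common zero) via the Rabinowitsch trick: given $f \in I(\Vcal_\C(I))$, the ideal $I + \langle 1 - yf\rangle \subseteq \C[X,y]$ has empty complex variety, hence contains $1$; clearing the auxiliary variable $y$ produces a power of $f$ in $I$. The weak Nullstellensatz itself I would obtain from Noether normalization, reducing to the algebraic closedness of $\C$. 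The identity $\sqrt{\{0\}} = \{0\}$ just records the fact that $\C[X]$ is an integral domain.

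For the real case, the easy direction $\Rradical{I} \subseteq I(\Vcal_\R(I))$ is equally elementary: evaluating $f^{2r} + \sum g_i^2 \in I$ at any $x \in \Vcal_\R(I)$ yields a sum of nonnegative reals equal to $0$, forcing $f(x) = 0$. The reverse direction is the Krivine--Stengle Real Nullstellensatz, whose proof I would quote from a standard source (Bochnak--Coste--Roy or Marshall) rather than reconstruct; the classical derivation passes through the abstract Positivstellensatz together with a Zorn's-lemma argument on maximal proper cones in $\R[X]/I$ and the Tarski transfer principle for real closed fields. The identity $\Rradical{\{0\}} = \{0\}$ reduces to the observation that a sum of squares in $\R[X]$ vanishes identically only if each summand does, which is immediate because $\R[X]$ embeds into the ring of continuous functions $\R^n \to \R$.

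The final clause — $I$ is real if and only if $\Vcal_\R(I) \neq \emptyset$ — follows in two lines. If $x \in \Vcal_\R(I)$, any presumed identity $-1 \equiv \sum g_i^2 \pmod I$ would evaluate at $x$ to give $-1 \geq 0$, contradiction. Conversely, if $\Vcal_\R(I) = \emptyset$ then $I(\Vcal_\R(I)) = \R[X]$, so $1 \in \Rradical{I}$ by the Real Nullstellensatz already established; unwinding the definition, $1 + \sum g_i^2 \in I$ for some $g_i$, i.e.\ $-1$ is a sum of squares modulo $I$.

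The only genuine obstacle is the forward direction of the Real Nullstellensatz, which is the sole nontrivial step. Since the rest of the paper uses these identities purely as foundational machinery and not as an object of study in themselves, I would cite rather than reproduce its proof.
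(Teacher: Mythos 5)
Your proposal is correct. The paper itself states this Nullstellensatz as standard background without proof, deferring to \cite{Marshall2008PositiveSOS} and \cite{KnebuschScheiderer2022RealAlg}, so your strategy --- verifying the elementary inclusions and the final equivalence directly, while citing Hilbert's Nullstellensatz (via Rabinowitsch) and the Krivine--Stengle Real Nullstellensatz for the two deep directions --- is exactly the appropriate treatment and consistent with the paper's. All the elementary steps you spell out (positivity forcing $f(x)=0$, the reducedness of $\C[X]$ and $\R[X]$, and the two-line derivation that $I$ is real iff $\Vcal_\R(I)\neq\emptyset$ from the paper's definition of ``real'') are accurate.
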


There are three main approaches to check whether an ideal is real radical:

\begin{enumerate}
    \item \textbf{Complexification test.}
    If $\Vcal_\C(\Vcal_\R(I)) = \Vcal_\C(I)$,
    then one only needs to verify that $I$ is (complex) radical.

    \item \textbf{Sum-of-squares criterion.}
    As observed in \cite{Marshall2008PositiveSOS},
    it suffices to check the implication
    \[
        f_1^2 + \cdots + f_l^2 \in I \ \Longrightarrow\ f_i \in I
        \quad\text{for all } i.
    \]

    \item \textbf{Single-point criterion.}
    If $I$ is prime, it is enough to find a smooth real point on the variety
    $\Vcal_\R(I)$; see Section~\ref{section: real prime} and
    \cite{Marshall2008PositiveSOS}.
\end{enumerate}

In this work we employ all three methods to verify the real reducedness
of the ideals~$I_p$.
For further background on real algebraic geometry, we refer to
\cite{Marshall2008PositiveSOS,KnebuschScheiderer2022RealAlg}.

\subsection{Complexification of Real Ideals}\label{section: complexification}

To circumvent certain subtleties of real algebraic geometry,
we will often transfer problems to the complex setting.
Let $I \subseteq \R[X]$ be an ideal, and define its \emph{complexification} as
\[
    I_\C := I\,\C[X].
\]
When the ambient field is clear from context, we use the simplified notation
\[
    \Vcal_\R(I) = \Vcal(I)
    \quad\text{and}\quad
    \Vcal_\C(I) = \Vcal_\C(I_\C).
\]
The next lemma shows that complexification behaves well with respect to
primary and prime ideals.

\begin{lemma}
    Let $I \subseteq \R[X]$ be an ideal and let $I_\C$ be its complexification.
    Then
    \[
        I_\C = I\,\C[X]
        \quad\text{and}\quad
        I = I_\C \cap \R[X].
    \]
    Moreover, if $I_\C$ is primary (respectively, prime),
    then $I$ is also primary (respectively, prime).
\end{lemma}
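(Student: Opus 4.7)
The statement decomposes into three parts. The equality $I_\C = I\,\C[X]$ is the definition of complexification; only the contraction identity $I = I_\C \cap \R[X]$ and the descent of primary/prime require argument. The plan is to derive both from the free $\R[X]$-module structure of $\C[X]$.

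For the contraction identity, the inclusion $I \subseteq I_\C \cap \R[X]$ is tautological. For the reverse, I would use the decomposition $\C[X] = \R[X] \oplus i\,\R[X]$ as a free $\R[X]$-module of rank two. A generic element of $I_\C = I\,\C[X]$ has the form $\sum_k (a_k + i b_k)\,f_k$ with $f_k \in I \subseteq \R[X]$ and $a_k, b_k \in \R[X]$; distributing gives $p + iq$ with $p = \sum_k a_k f_k \in I$ and $q = \sum_k b_k f_k \in I$. If such an element happens to lie in $\R[X]$, the uniqueness of the decomposition forces $q = 0$, so the element equals $p \in I$.

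For the descent of primary and prime, I would invoke the standard general fact that the contraction of a primary (respectively prime) ideal along any ring homomorphism is again primary (respectively prime). Applied to the inclusion $\R[X] \hookrightarrow \C[X]$ and combined with the contraction identity just established, this gives the claim directly: if $fg \in I$ with $f, g \in \R[X]$, then $fg \in I_\C$, so primality of $I_\C$ forces $f$ or $g$ into $I_\C \cap \R[X] = I$; the primary case is identical, with $g$ replaced by $g^n$ for some $n \geq 1$.

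I do not expect a substantive obstacle. The lemma is formal bookkeeping once the decomposition $\C[X] = \R[X] \oplus i\,\R[X]$ is in hand, and no genuinely real-algebraic input (positivity, sums of squares, etc.) is invoked. Its value lies in licensing the transfer of later questions about a real ideal $I$ to its complexification $I_\C$, where the algebraically closed base field is more convenient.
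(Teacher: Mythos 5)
Your proposal is correct and takes essentially the same route as the paper: the paper also establishes $I = I_\C \cap \R[X]$ by writing an element of $I_\C$ as $\sum_j (a_j + i b_j) h_j$ with $h_j \in I$ and extracting the real part (via conjugation rather than your explicit appeal to the decomposition $\C[X] = \R[X] \oplus i\,\R[X]$, but this is the same computation), and then deduces the primary/prime descent exactly as you do, from the contraction identity. No gaps.
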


\begin{proof}
    The equality $I_\C = I\,\C[X]$ holds by definition.
    Let $f \in I_\C \cap \R[X]$.
    Then $f = \sum_j (a_j + ib_j)h_j$ with $a_j,b_j \in \R[X]$ and $h_j \in I$.
    Taking complex conjugates and adding gives a real expression
    \[
        f = \sum_j a_j h_j \in I.
    \]
    The converse inclusion is obvious, hence $I = I_\C \cap \R[X]$.

    Now assume that $I_\C$ is primary.
    If $f,g \in \R[X]$ satisfy $fg \in I \subseteq I_\C$,
    then either $f \in I_\C$ or $g^r \in I_\C$ for some $r>0$.
    Since both $f$ and $g$ are real, it follows that $f \in I$ or $g^r \in I$,
    so $I$ is primary.
    The argument for prime ideals is analogous.
\end{proof}

We next show that complexification preserves the dimension of an ideal.
There are several equivalent notions of dimension;
we adopt the one based on algebraically independent variable subsets
(see \cite{Kemper2011CommAlg}).

Let $\R[X] = \R[x_1,\dots,x_n]$ and $\C[X] = \C[x_1,\dots,x_n]$.
For $\Jcal \subseteq [n] := \set{1,\dots,n}$, define
$\R[\Jcal] := \R[x_i : i \in \Jcal]$.
We say that $\Jcal$ is \emph{algebraically independent} (alg.~ind.) with respect to $I$
if $\R[\Jcal] \cap I = \set{0}$.
The \emph{dimension} of $I$ is then defined as
\[
    \dim(I) := 
    \max\set{|\Jcal| : \Jcal \subseteq [n], \ \R[\Jcal] \cap I = \set{0}}.
\]

\begin{lemma}
    Let $I \subseteq \R[x_1,\dots,x_n]$ be an ideal.
    A subset $\Jcal \subseteq [n]$ is algebraically independent with respect to $I$
    if and only if it is algebraically independent with respect to $I_\C$
    in $\C[X]$. In particular, $\dim(I) = \dim(I_\C)$.
\end{lemma}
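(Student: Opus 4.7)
The plan is to reduce both directions to the previous lemma (which gave $I = I_\C \cap \R[X]$), using the fact that complex conjugation acts on $I_\C$ because its generators are real. The reverse direction is immediate: any $f \in \R[\mathcal{J}] \cap I$ lies in $\C[\mathcal{J}] \cap I_\C$, so if the latter is zero then so is the former.

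For the forward direction, I would take an arbitrary $f \in \C[\mathcal{J}] \cap I_\C$ and decompose it as $f = g + ih$ with $g, h \in \R[\mathcal{J}]$. Since $I_\C = I\,\C[X]$ is generated by elements of $\R[X]$, complex conjugation maps $I_\C$ into itself, giving $\bar f = g - ih \in I_\C$. Then
\[
    g = \tfrac{1}{2}(f + \bar f) \in I_\C \cap \R[\mathcal{J}],
    \qquad
    h = \tfrac{1}{2i}(f - \bar f) \in I_\C \cap \R[\mathcal{J}].
\]
Applying the preceding lemma, $I_\C \cap \R[X] = I$, so $g, h \in I \cap \R[\mathcal{J}] = \{0\}$ by hypothesis. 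Hence $f = 0$, establishing that $\C[\mathcal{J}] \cap I_\C = \{0\}$.

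The dimension statement follows at once: the collection of algebraically independent subsets $\mathcal{J} \subseteq [n]$ is the same for $I$ and for $I_\C$, so the maxima of $|\mathcal{J}|$ coincide, giving $\dim(I) = \dim(I_\C)$.

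There is no real obstacle here—the argument is entirely formal once the previous lemma is in place. The only point worth flagging is the conjugation step, which relies crucially on the fact that complexification is generated by \emph{real} polynomials; without this observation one could not split a complex polynomial in $I_\C$ into two real polynomials still lying in $I_\C$.
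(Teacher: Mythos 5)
Your proof is correct and follows essentially the same route as the paper: both directions reduce to splitting a polynomial in $I_\C \cap \C[\Jcal]$ into real and imaginary parts and invoking $I = I_\C \cap \R[X]$ from the preceding lemma. Your conjugation step merely makes explicit the detail the paper leaves implicit when it asserts that both real and imaginary parts lie in $I \cap \R[\Jcal]$.
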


\begin{proof}
    It suffices to consider algebraic dependence.
    If $\Jcal$ is algebraically dependent with respect to $I$,
    there exists a nonzero $f \in I \cap \R[\Jcal]$.
    Then $f \in I_\C \cap \C[\Jcal]$, so $\Jcal$ is also dependent for $I_\C$.

    Conversely, suppose $\Jcal$ is dependent for $I_\C$.
    Let $f = f_1 + i f_2 \in I_\C \cap \C[\Jcal]$ be a nonzero dependence polynomial,
    with $f_1,f_2 \in \R[\Jcal]$.
    Both $f_1$ and $f_2$ belong to $I\cap \R[\Jcal]$,
    showing that $\Jcal$ is also dependent for $I$.
\end{proof}

Finally, if $I$ is primary, then all maximal algebraically independent subsets
have the same cardinality.
Equivalently, the algebraic independence with respect to~$I$
forms a matroid (see \cite{Oxley1992Matroid}).

\begin{lemma}
    Let $\Kbb$ be a field and $I \subseteq \Kbb[X_1,\dots,X_n]$ a primary ideal.
    Then the algebraic independence of $\set{X_1,\dots,X_n}$ with respect to $I$
    defines an algebraic matroid.
\end{lemma}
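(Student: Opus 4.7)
My plan is to reduce from the primary ideal $I$ to the prime ideal $P := \sqrt{I}$, and then invoke the classical fact that algebraic independence of elements of a field extension defines a matroid.

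The first step is to verify that a subset $\Jcal \subseteq [n]$ is algebraically independent with respect to $I$ if and only if it is algebraically independent with respect to $P$. The inclusion $I \subseteq P$ gives one direction immediately. For the converse, if $f \in P \cap \Kbb[\Jcal]$ then $f^r \in I$ for some $r \geq 1$, hence $f^r \in I \cap \Kbb[\Jcal]$; if this intersection is zero then $f^r = 0$ in the domain $\Kbb[X_1,\dots,X_n]$, forcing $f = 0$.

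Once we are working with the prime $P$, the quotient $\Kbb[X_1,\dots,X_n]/P$ is a domain, with fraction field $L$. The kernel of the induced map $\Kbb[\Jcal] \to L$ is precisely $P \cap \Kbb[\Jcal]$, so $\Jcal$ is algebraically independent with respect to $P$ exactly when the images of the variables $X_j$ for $j \in \Jcal$ are algebraically independent in $L/\Kbb$ in the classical field-theoretic sense. The standard theorem of transcendence theory (see \cite{Oxley1992Matroid}) then states that such algebraic independence satisfies the matroid axioms, with the Steinitz exchange lemma providing the nontrivial augmentation property. Pulling the resulting matroid back through the two equivalences yields the matroid structure on $\set{X_1,\dots,X_n}$ with respect to $I$. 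No substantive obstacle arises; the only delicate point is that primariness rather than primeness of $I$ is used, a reduction made possible by the fact that $\Kbb[X_1,\dots,X_n]$ has no nilpotents.
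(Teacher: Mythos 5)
Your proposal is correct and follows essentially the same route as the paper: replace $I$ by its radical $P=\sqrt{I}$ (which is prime because $I$ is primary), observe that algebraic independence is unchanged under this replacement, and then invoke the classical matroid structure of algebraic independence in the field extension $\Kbb\hookrightarrow\mathrm{Frac}(\Kbb[X]/P)$. You merely spell out the details (the equivalence via $f^r\in I\cap\Kbb[\Jcal]$ and the identification of the kernel of $\Kbb[\Jcal]\to L$) that the paper leaves implicit.
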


\begin{proof}
    Algebraic independence with respect to~$I$ coincides with that
    with respect to its radical $P = \sqrt{I}$.
    Since $P$ is prime, it corresponds to the algebraic independence
    of elements in the field extension
    \[
        \Kbb \hookrightarrow \mathrm{Frac}(\Kbb[X]/P).
    \]
    Such independence defines an algebraic matroid
    (see \cite{Oxley1992Matroid}).
\end{proof}

\section{A First Look at the Cases \texorpdfstring{$p = 0, 1, \infty$}{}}\label{section: p-0-1-infty}

In this section we establish the real reducedness of the ideals $I_p$ for
$p \in \{0,1,\infty\}$.
We employ the first two techniques introduced in Section~\ref{section: pre-nullstellensatz}, namely the complexification test and the sum-of-squares criterion.

Our main computational tool is the Gr\"obner basis computed in \cite{RoehrichZhou2025ThetaNuclear}. For convenience, we recall the underlying monomial order. For variables $a,b \in [\nbf]$ we write $x_a > x_b$ whenever $a<b$ in lexicographic order. For monomials $x^\alpha$ and $x^\beta$ with exponent vectors $\alpha,\beta \in \N^\nbf$, we use the graded reverse lexicographic (grevlex) order:
\[
    x^\alpha > x^\beta
    \iff
    \big(\sum_i \alpha_i > \sum_i \beta_i\big)
    \text{ or }
    \big(\sum_i \alpha_i = \sum_i \beta_i
    \text{ and the rightmost nonzero entry of }\alpha-\beta\text{ is negative}\big).
\]
Let $\Gcal_p$ denote the Gr\"obner basis of $I_p$ (and of its complexification $I_{p,\C}$). Explicit descriptions of $\Gcal_p$ will be given only where needed. The key fact for our purposes is that
\[
    \Gcal_p \subseteq \R[X],
\]
that is, all elements of the Gr\"obner basis are real polynomials.

\subsection*{The Cases \texorpdfstring{$p = 1$}{p=1} and \texorpdfstring{$p = \infty$}{p=infty}}

For $p = 1$ and $p = \infty$, we employ the first approach from Section~\ref{section: pre-nullstellensatz}. In both cases, the real and complex varieties coincide,
\[
    \Vcal_\R(I_p) = \Vcal_\C(I_p),
\]
and are finite. Hence $\dim I_p = 0$, and the question of real reducedness reduces to  verifying that $I_p$ is (complex) radical. This will follow from Seidenberg's lemma
(\cite[Proposition~3.7.15]{KreuzerRobbiano2008CompCommAlg1}).

\begin{lemma}[Seidenberg's Lemma]
    Let $\Kbb$ be a field and $I \subseteq \Kbb[x_1,\dots,x_n]$ a zero-dimensional ideal.     Suppose that for every $i \in \{1,\dots,n\}$ there exists a nonzero polynomial $f_i \in I \cap \Kbb[x_i]$ such that $f_i$ and its derivative $f_i'$ are coprime in $\Kbb[x_i]$.
    Then $I$ is radical.
\end{lemma}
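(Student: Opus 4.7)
The plan is to proceed by induction on the number of variables $n$, combining the Chinese Remainder Theorem with the fact that separability of a univariate polynomial is preserved under any base field extension.

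For the base case $n=1$, the ring $\Kbb[x_1]$ is a principal ideal domain, so $I=\langle h\rangle$ for some $h$ dividing $f_1$. Since $\gcd(f_1,f_1')=1$ forces $f_1$ to be square-free, so is $h$, and hence $I$ is radical.

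For the inductive step I would factor $f_n = c\prod_{j=1}^{k} q_j$ into pairwise distinct monic irreducible factors in $\Kbb[x_n]$ (the distinctness coming from the coprimality hypothesis). Setting $Q_j := I + \langle q_j\rangle$, the ideals $Q_j$ are pairwise comaximal, and
\[
    \bigcap_{j=1}^{k} Q_j \;=\; I + \Big\langle \prod_{j=1}^{k} q_j\Big\rangle \;=\; I + \langle f_n\rangle \;=\; I.
\]
The Chinese Remainder Theorem then yields
\[
    \Kbb[x_1,\dots,x_n]/I \;\cong\; \prod_{j=1}^{k} \Kbb[x_1,\dots,x_n]/Q_j \;\cong\; \prod_{j=1}^{k} L_j[x_1,\dots,x_{n-1}]/\bar I_j,
\]
where $L_j := \Kbb[x_n]/\langle q_j\rangle$ is a field and $\bar I_j$ denotes the image of $I$ in $L_j[x_1,\dots,x_{n-1}]$.

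To invoke the inductive hypothesis on each $\bar I_j$ I would verify two points. First, $\bar I_j$ is zero-dimensional because $L_j[x_1,\dots,x_{n-1}]/\bar I_j$ is a quotient of the finite-dimensional $\Kbb$-algebra $\Kbb[X]/I$ and therefore finite-dimensional over $L_j$ as well. Second, each $f_i$ for $i<n$ remains separable after extension to $L_j[x_i]$: a Bezout identity $1 = u\,f_i + v\,f_i'$ in $\Kbb[x_i]$ persists verbatim in $L_j[x_i]$. By induction every $\bar I_j$ is radical, hence $\Kbb[X]/I$ is a product of reduced rings and thus reduced, which is exactly the statement that $I$ is radical. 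The only genuinely delicate point is this preservation of separability under $\Kbb\hookrightarrow L_j$; the rest is a routine zero-dimensional CRT decomposition.
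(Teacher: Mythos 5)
Your argument is correct: the paper itself does not prove this lemma but only cites \cite[Proposition~3.7.15]{KreuzerRobbiano2008CompCommAlg1}, and your induction on $n$ --- splitting $I$ via the Chinese Remainder Theorem over the distinct irreducible factors $q_j$ of the square-free $f_n$, passing to $L_j[x_1,\dots,x_{n-1}]$, and noting that the B\'ezout certificate for $\gcd(f_i,f_i')=1$ survives the extension $\Kbb\hookrightarrow L_j$ (while $f_i$ stays nonzero there, since $\Kbb\to L_j$ is injective) --- is exactly the standard proof of that cited result. All the steps check out, including the identity $\bigcap_j Q_j=\prod_j Q_j=I+\langle\prod_j q_j\rangle=I$ from pairwise comaximality and the fact that a finite product of reduced rings is reduced.
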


Recall that
\begin{equation}\label{eq:I1}
    I_1 = \gen{
        \sum_{a \in [\nbf]} x_a^2 - 1,\,
        x_a^3 - x_a \text{ for all } a\in[\nbf],\,
        x_a x_b \text{ for } a \neq b \in [\nbf]
    }
\end{equation}
and
\begin{equation}\label{eq:Iinfty}
    I_\infty = \gen{ x_a^{2}-1 \text{ for } a\in[\nbf],\,  \text{rank-1 binomials} }.
\end{equation}
The ideal $I_1$ coincides with its Gr\"obner basis, while $I_\infty$ admits a corresponding basis described in
\cite{RoehrichZhou2025ThetaNuclear}.

\begin{corollary}
    Let $p \in \{1,\infty\}$ and let $I_{p,\C}$ denote the complexification of $I_p$. Then $I_{p,\C}$ is radical.
\end{corollary}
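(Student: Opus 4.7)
The plan is to invoke Seidenberg's Lemma directly, since the generators of $I_1$ and $I_\infty$ already contain univariate polynomials in each coordinate that happen to factor into distinct linear factors.

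First I would verify zero-dimensionality. For $I_{1,\C}$, the generator $x_a^3 - x_a = x_a(x_a-1)(x_a+1)$ forces each $x_a$ to lie in a finite set, hence $\Vcal_\C(I_1)$ is finite and $\dim I_{1,\C} = 0$. For $I_{\infty,\C}$, the generator $x_a^2 - 1 = (x_a-1)(x_a+1)$ does the same. In both cases the existence of a nonzero univariate polynomial in each $x_a$ inside the ideal is immediate from the presentations \eqref{eq:I1} and \eqref{eq:Iinfty}; this both establishes that $I_{p,\C}$ is zero-dimensional and supplies the candidate polynomials $f_a \in I_{p,\C} \cap \C[x_a]$ required by Seidenberg's Lemma.

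Next I would check the coprimality condition. For $p=1$, take $f_a = x_a^3 - x_a$. Its derivative $f_a' = 3x_a^2 - 1$ shares no root with $f_a$ (the roots $0, \pm 1$ of $f_a$ are not zeros of $3x_a^2 - 1$), so $\gcd(f_a, f_a') = 1$ in $\C[x_a]$. For $p=\infty$, take $f_a = x_a^2 - 1$; its derivative $2x_a$ is coprime to $f_a$ for the same reason. Equivalently, both polynomials are square-free, which is the content of the coprimality hypothesis.

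Applying Seidenberg's Lemma then yields that $I_{p,\C}$ is radical for $p \in \{1, \infty\}$. No obstacle is anticipated: the entire argument is a bookkeeping exercise because the Gröbner bases recalled from \cite{RoehrichZhou2025ThetaNuclear} already contain the needed square-free univariate polynomials as explicit generators. The only point worth remarking is that working over $\C$ (rather than $\R$) is harmless here, since the square-freeness of $x_a(x_a\pm 1)$ and $x_a^2 - 1$ is independent of the base field.
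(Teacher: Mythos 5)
Your proof is correct and follows essentially the same route as the paper: both establish zero-dimensionality, take $f_a = x_a^3 - x_a$ (resp.\ $x_a^2-1$) as the required univariate polynomials, check coprimality with the derivative, and conclude via Seidenberg's Lemma. The only cosmetic difference is that you derive zero-dimensionality directly from the univariate generators, whereas the paper cites finiteness of the variety; both are immediate.
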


\begin{proof}
    For both $p = 1$ and $p = \infty$ the variety     $\Vcal_\C(I_p) = \Vcal_\R(I_p)$ is finite, so $I_{p,\C}$ is zero-dimensional. For each index $a$ set
    \[
        f_a(x_a) =
        \begin{cases}
            x_a^3 - x_a, & p = 1,\\
            x_a^2 - 1,   & p = \infty.
        \end{cases}
    \]
    Then $f_a, f_a'$ are coprime in $\C[x_a]$, and Seidenberg's lemma implies that $I_{p,\C}$ is radical.
\end{proof}

\begin{theorem}
    For $p \in \{1,\infty\}$, the ideal $I_p \subseteq \R[X]$ is real radical.
\end{theorem}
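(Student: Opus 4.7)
The plan is to apply the complexification test from Section~\ref{section: pre-nullstellensatz}: since the preceding corollary already establishes that $I_{p,\C}$ is (complex) radical, it suffices to verify that $\Vcal_\C(I_p) = \Vcal_\R(I_p)$, i.e.\ that every complex zero of $I_p$ is already real.

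First I would inspect the generators directly. For $p=\infty$, each generator $x_a^2-1$ forces $x_a\in\{\pm 1\}$ at any complex zero, so $\Vcal_\C(I_\infty)\subseteq\R^\nbf$ without even invoking the rank-one binomials. For $p=1$, the relations $x_a^3-x_a$ force each coordinate into $\{-1,0,1\}$; the mixed generators $x_ax_b$ ($a\neq b$) force at most one coordinate to be nonzero; and $\sum_a x_a^2=1$ pins the nonzero coordinate to $\pm 1$. Hence $\Vcal_\C(I_1)=\{\pm e_a : a\in[\nbf]\}\subseteq\R^\nbf$. In both cases the real and complex varieties agree.

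With $I_{p,\C}$ radical and $\Vcal_\C(I_p)=\Vcal_\R(I_p)$, Hilbert's Nullstellensatz yields $I_{p,\C}=I(\Vcal_\C(I_p))=I(\Vcal_\R(I_p))$ inside $\C[X]$. Intersecting with $\R[X]$ and invoking the complexification lemma of Section~\ref{section: complexification} (which gives $I_p=I_{p,\C}\cap\R[X]$), we obtain $I_p=I(\Vcal_\R(I_p))\cap\R[X]$, i.e.\ $I_p$ is the real vanishing ideal of $\Vcal_\R(I_p)$. By the real Nullstellensatz this equals $\Rradical{I_p}$, so $I_p$ is real radical.

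No step is a serious obstacle: the coordinate-wise verification that $\Vcal_\C(I_p)\subseteq\R^\nbf$ is the only computational ingredient, and it is immediate from the explicit generating sets. Everything else is packaged into the Nullstellensatz and the compatibility of complexification with intersection, both already recorded in Section~\ref{section: preliminary}.
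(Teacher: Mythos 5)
Your proof is correct and follows essentially the same complexification strategy as the paper: both arguments combine the equality $\Vcal_\C(I_p)=\Vcal_\R(I_p)$ with the preceding corollary that $I_{p,\C}$ is radical. The only substantive difference is the final descent from $I_{p,\C}$ to $I_p$ --- the paper divides $f$ by the real Gr\"obner basis $\Gcal_p$ to exhibit a real representation, whereas you invoke the lemma $I_p=I_{p,\C}\cap\R[X]$ from Section~\ref{section: complexification}, which is equally valid and arguably cleaner; you also spell out the coordinate-wise verification that the complex variety is real, which the paper only asserts.
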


\begin{proof}
    Since $\Vcal_\C(I_{p,\C}) = \Vcal_\R(I_p)$, Hilbert’s Nullstellensatz yields that any
    $f \in \R[X]$ vanishing on $\Vcal_\R(I_p)$ belongs to $I_{p,\C}$. Using the Gr\"obner basis $\Gcal_p$ from \cite{RoehrichZhou2025ThetaNuclear}, the division algorithm gives an explicit representation
    \[
        f(x) = \sum_{g(x)\in\Gcal_p} c_g(x)\, g(x),
    \]
    with all coefficients $c_g(x)$ and basis elements $g(x)$ real. Hence $f \in I_p \subseteq \R[X]$. Thus $I_p$ is precisely the vanishing ideal of its real variety, and by the Real Nullstellensatz it follows that $I_p$ is real radical.
\end{proof}

\subsection*{The Case \texorpdfstring{$p = 0$}{p=0}}

As mentioned earlier, the reducedness of $I_0$ will play a key role in establishing the real reducedness of $I_2$. We present the argument here as a first illustration. Recall that $I_0$ is the ideal generated by all rank-one binomials. To describe these binomials explicitly, let $a,b\in[\nbf]$ be distinct multi-indices, and define
\[
a\wedge b := (\min\{a_1,b_1\},\dots,\min\{a_d,b_d\}), \qquad a\vee b := (\max\{a_1,b_1\},\dots,\max\{a_d,b_d\}).
\]
Following \cite{RoehrichZhou2025ThetaNuclear} and \cite{RauhutStojanac2021ThetaNuclear}, the rank-one condition is characterized by all binomials of the form
\[
x_a x_b - x_{a\vee b} x_{a\wedge b}, \qquad a,b\in[\nbf].
\]
To avoid trivial polynomials, we define
\[
\Scal := \{(a,b)\in[\nbf]^2 : \exists\, i,j\in[d] \text{ such that } a_i>b_i,\ a_j<b_j\}.
\]
Then
\begin{equation}\label{def: I0}
    I_0 = \big\langle\, x_a x_b - x_{a\vee b}x_{a\wedge b} : (a,b)\in\Scal \,\big\rangle \subseteq \R[X].
\end{equation}
Moreover, the set of generators above forms a Gr\"obner basis whose leading terms are $x_a x_b$ with $(a,b)\in\Scal$. Let $I_{0,\C}$ denote the complexification of $I_0$. 
We now show that $I_0$ is real reduced and that $I_{0,\C}$ is reduced.

\begin{theorem}\label{thm: reduced-I0}
The real ideal $I_0$ is real radical, and its complexification $I_{0,\C}$ is radical.
\end{theorem}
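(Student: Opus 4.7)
The strategy is the complexification test from Section~\ref{section: pre-nullstellensatz}: I will show that $I_{0,\C}$ is radical and that $\Vcal_\R(I_0)$ is Zariski-dense in $\Vcal_\C(I_{0,\C})$.  Combined with the identity $I_0 = I_{0,\C}\cap\R[X]$ from the complexification lemma, these yield $I_0 = I(\Vcal_\R(I_0))$, and the Real Nullstellensatz then gives real radicality of $I_0$.

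For radicality of $I_{0,\C}$, the Gr\"obner basis recalled above has leading terms $x_ax_b$ with $(a,b)\in\Scal$, which are squarefree monomials.  Hence $\mathrm{in}_<(I_{0,\C})$ is a squarefree monomial ideal and is therefore radical.  A standard argument then transfers the property to $I_{0,\C}$ itself: for $f\in\sqrt{I_{0,\C}}$, the identity $\mathrm{in}_<(f)^r=\mathrm{in}_<(f^r)\in\mathrm{in}_<(I_{0,\C})$ forces $\mathrm{in}_<(f)\in\mathrm{in}_<(I_{0,\C})$, giving $\mathrm{in}_<(\sqrt{I_{0,\C}})=\mathrm{in}_<(I_{0,\C})$; since $I_{0,\C}\subseteq\sqrt{I_{0,\C}}$ and the two ideals share an initial ideal, they coincide (standard monomials give the same vector-space basis of the quotient).

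For the density step I consider the Segre parametrization
\[
    \phi : \C^{n_1}\times\cdots\times\C^{n_d} \to \C^{[\nbf]}, \qquad \phi(u^{(1)},\dots,u^{(d)})_a := u^{(1)}_{a_1}\cdots u^{(d)}_{a_d}.
\]
The inclusion $\phi(\C^{n_1+\cdots+n_d})\subseteq\Vcal_\C(I_{0,\C})$ is immediate, while the converse is the main technical step: given nonzero $T\in\Vcal_\C(I_{0,\C})$, pick an index $c$ with $T_c\neq 0$ and reconstruct factors $u^{(k)}$ from the axis slices through $c$ after normalization by $T_c$; iterated application of the rank-1 binomials then verifies $T=\phi(u)$.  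Hence $\Vcal_\C(I_{0,\C})=\phi(\C^{n_1+\cdots+n_d})$.  Since $\R^{n_1+\cdots+n_d}$ is Zariski-dense in $\C^{n_1+\cdots+n_d}$ and $\phi$ is polynomial, $\phi(\R^{n_1+\cdots+n_d})\subseteq\Vcal_\R(I_0)$ is Zariski-dense in $\phi(\C^{n_1+\cdots+n_d})=\Vcal_\C(I_{0,\C})$, completing the Zariski-density step.

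The main obstacle is the explicit Segre reconstruction: one needs to check that the slice formulas $u^{(k)}_j \propto T_{(c_1,\dots,c_{k-1},j,c_{k+1},\dots,c_d)}$ interact correctly with the rank-1 binomials so that the $d$-fold slice product recovers $T_a$ for every multi-index $a$.  This is routine but relies on a careful bookkeeping of wedges and joins $a\vee b$, $a\wedge b$ built step by step from $c$.
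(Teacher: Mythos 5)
Your proposal is correct in outline, but for the real-radicality half it takes a genuinely different route from the paper. For radicality of $I_{0,\C}$ you argue exactly as the paper does: the Gr\"obner basis has squarefree leading terms, so the initial ideal is radical, and the standard $\mathrm{in}(f^r)=\mathrm{in}(f)^r$ argument transfers this to $I_{0,\C}$. For real radicality, however, the paper uses the sum-of-squares criterion (approach 2 of Section~\ref{section: pre-nullstellensatz}): if $\sum_i f_i^2\in I_0$, reduce each $f_i$ modulo $\Gcal_0$ and observe that over $\R$ the leading term of $\sum_i r_i^2$ is $\LT(r_1)^2$, which would have to be divisible by some squarefree $x_ax_b$, forcing $x_ax_b\mid\LT(r_1)$ and hence $r_1=0$. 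That argument is purely combinatorial, uses only the leading-term structure of $\Gcal_0$, and needs no geometric input. You instead use the complexification test (approach 1), which requires identifying $\Vcal_\C(I_{0,\C})$ with the image of the Segre parametrization and then pushing Zariski-density of $\R^N\subset\C^N$ through $\phi$. Your logical chain there is sound: $f\in I(\Vcal_\R(I_0))$ vanishes on the Zariski-dense subset $\phi(\R^N)$, hence on $\Vcal_\C(I_{0,\C})$, hence $f\in\sqrt{I_{0,\C}}=I_{0,\C}$, and $I_{0,\C}\cap\R[X]=I_0$ finishes. Methodologically this is the same test the paper applies to $p=1,\infty$, so your proof is more uniform across the cases; the paper's choice of the SOS criterion for $I_0$ is explicitly meant as an illustration of that technique.

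The one place your write-up is not complete is the containment $\Vcal_\C(I_{0,\C})\subseteq\phi(\C^N)$, which you correctly flag as the main technical step. Be aware that the naive one-coordinate-at-a-time splitting from the base index $c$ does not work directly when the signs of $a_i-c_i$ are mixed, because $u\vee v$ and $u\wedge v$ then fail to land on the intended indices; one should first apply the binomial for the pair $(a,c)$ itself to reduce to the two monotone cases $a\ge c$ and $a\le c$, where the iteration goes through. Alternatively, this set-theoretic characterization of rank-one tensors by the binomials is exactly what the paper quotes from \cite{RauhutStojanac2021ThetaNuclear,RoehrichZhou2025ThetaNuclear}, so you could simply cite it. The paper's leading-term argument sidesteps this geometric input entirely, which is the main advantage of its approach.
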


\begin{proof}
It suffices to use the Gr\"obner basis $\Gcal_0 = \{x_a x_b - x_{a\vee b}x_{a\wedge b} : (a,b)\in\Scal\}$. Since the leading terms $x_a x_b$ are square-free, both $I_0$ and $I_{0,\C}$ are reduced; see \cite{CoxLittleOShea1997AlgGeometry}. 
To establish real reducedness, we apply the second criterion from Section~\ref{section: pre-nullstellensatz}. Let $\LT(\cdot)$ denote the leading term. 
Suppose $f^2\in I_0$ but $f\notin I_0$. Then the remainder $r$ of $f$ upon division by $\Gcal_0$ satisfies $x_a x_b \nmid \LT(r)$ for all $(a,b)\in\Scal$. 
However, $f^2\in I_0$ implies that $r^2\in I_0$, so there exists $(a,b)\in\Scal$ with $x_a x_b\mid \LT(r^2)$, hence $x_a x_b\mid \LT(r)$ — a contradiction. 

Now consider a sum of squares $\sum_{i=1}^l f_i^2 \in I_0$. We show that each $f_i\in I_0$. Let $r_i$ denote the remainder of $f_i$ upon division by $\Gcal_0$, so $\sum_i r_i^2\in I_0$. Without loss of generality, order the leading terms as $\LT(r_1)>\LT(r_2)>\cdots>\LT(r_l)$. Then some $(a,b)\in\Scal$ satisfies $x_a x_b\mid \LT(\sum_i r_i^2)$, implying $x_a x_b\mid \LT(r_1)$. 
If $r_1\neq 0$, this is again a contradiction; thus $r_1=0$ and hence all $r_i=0$. 
Therefore $f_i\in I_0$ for all $i$, and by \cite[Proposition~12.5.1]{Marshall2008PositiveSOS}, the ideal $I_0$ is real radical.
\end{proof}

\section{From Primary to Prime Ideals}\label{section: primary-2-prime}

Before turning to the real reducedness of $I_{2s}$, we establish our first main tool, Theorem~\ref{thm: intro-primary2prime}. It refines the algorithm of \cite{KrickLogar1991Radical} for the special case of primary ideals and serves as a key ingredient in the proof of Theorem~\ref{thm: intro-realprime}. In practice, for any fixed tensor size $\nbf$ and even exponent $p = 2s$, one can verify—using algebra systems such as OSCAR~\cite{OSCAR}—that $I_{2s}$ is primary or even prime. The goal of this section is to address the behavior of such primary ideals and to prove Theorem~\ref{thm: intro-primary2prime}.

Recall that Seidenberg's lemma provides a practical criterion for testing the reducedness of zero-dimensional ideals.

\begin{lemma}[Seidenberg's Lemma]
Let $\Kbb$ be a field and $I \subseteq \Kbb[x_1,\dots,x_n]$ a zero-dimensional ideal. Suppose that for each $i \in \{1,\dots,n\}$ there exists a nonzero polynomial $f_i \in I \cap \Kbb[x_i]$ such that $f_i$ and its derivative $f_i'$ are coprime in $\Kbb[x_i]$. Then $I$ is radical.
\end{lemma}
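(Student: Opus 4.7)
The strategy is to show that $\Kbb[X]/I$ is a finite product of separable field extensions of $\Kbb$, and hence reduced. Since each $f_i \in I$, there is a surjection $\Kbb[X]/\langle f_1,\dots,f_n\rangle \twoheadrightarrow \Kbb[X]/I$, so it suffices to analyze the structure of $A := \Kbb[X]/\langle f_1,\dots,f_n\rangle$ together with its quotients.

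The first step is the canonical identification
\[
    A \;\cong\; \bigotimes_{i=1}^n \Kbb[x_i]/\langle f_i\rangle,
\]
which reduces the problem to the univariate factors. Because $\gcd(f_i,f_i')=1$, each $f_i$ is squarefree with pairwise distinct, separable irreducible factors $f_i = \prod_j p_{i,j}$, and the Chinese Remainder Theorem yields
\[
    \Kbb[x_i]/\langle f_i\rangle \;\cong\; \prod_j \Kbb[x_i]/\langle p_{i,j}\rangle,
\]
a finite product of separable field extensions of $\Kbb$.

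Next I would show that the class of finite products of separable field extensions of $\Kbb$ is closed under tensor product over $\Kbb$. By induction this reduces to the case of two separable extensions $L = \Kbb[y]/\langle g\rangle$ and $M$, where $L \otimes_\Kbb M \cong M[y]/\langle g\rangle$; since $g$ remains separable over $M$, its factorization $g = g_1\cdots g_r$ in $M[y]$ consists of pairwise coprime separable irreducibles, so CRT again produces a product of separable field extensions of $M$. Iterating over the $n$ tensor factors exhibits $A$ itself as a finite product of fields.

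Finally, any ideal in a finite product of fields $\prod_\ell K_\ell$ has the form $\prod_{\ell \in S} K_\ell$ for some subset $S$ of the index set, so every quotient is again a finite product of fields and in particular reduced. Applied to $\Kbb[X]/I$ as a quotient of $A$, this gives $I = \sqrt{I}$. The main obstacle I anticipate is the tensor-closure step: separability of the $p_{i,j}$ is essential, since inseparable factors would acquire nilpotents after tensoring and the argument would collapse. The hypothesis $\gcd(f_i,f_i') = 1$ is precisely the input needed to rule this out.
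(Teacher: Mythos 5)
The paper does not actually prove this lemma; it is quoted from \cite[Proposition~3.7.15]{KreuzerRobbiano2008CompCommAlg1}, so your argument can only be compared with the standard textbook proof. Your proof is correct and is essentially that argument, repackaged: you exhibit $\Kbb[X]/\langle f_1,\dots,f_n\rangle \cong \bigotimes_i \Kbb[x_i]/\langle f_i\rangle$ as a finite product of separable field extensions and observe that every quotient of such a ring is again a product of fields, hence reduced; Kreuzer--Robbiano instead induct on $n$, splitting $I$ as $\bigcap_j (I+\langle p_{1,j}\rangle)$ along the irreducible factors of $f_1$ and passing to $L_j[x_2,\dots,x_n]$ with $L_j = \Kbb[x_1]/\langle p_{1,j}\rangle$. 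Both hinge on the identical point you flag at the end: $\gcd(f_i,f_i')=1$ forces every irreducible factor to be separable, and a Bézout identity $uf_i+vf_i'=1$ over $\Kbb$ persists over any extension field, so no nilpotents appear after base change. Two small points worth making explicit if you write this up: the condition $\gcd(f_i,f_i')=1$ rules out inseparable irreducible factors because such a factor $q$ has $q'=0$ and hence divides both $f_i$ and $f_i'$; and in the tensor-closure induction only the \emph{incoming} factor $L=\Kbb[x_i]/\langle p_{i,j}\rangle$ needs to be monogenic over $\Kbb$ (which it is by construction, so no appeal to the primitive element theorem is required), while the accumulated field $M$ need only be a field. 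Your treatment of quotients of $\prod_\ell K_\ell$ is also correct, and it makes the stated zero-dimensionality hypothesis redundant, since the existence of the $f_i$ already forces $\dim_\Kbb \Kbb[X]/I < \infty$.
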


\begin{remark}
A polynomial $f \in \Kbb[X]$ is called \emph{strongly square-free} if $f$ and $f'$ are coprime in $\Kbb[X]$. By \cite[Proposition~3.7.9]{KreuzerRobbiano2008CompCommAlg1}, this condition implies that $f$ is square-free, and the converse holds whenever $\Kbb$ is perfect. In practice, the condition can be verified by computing $\gcd(f,f') = 1$.
\end{remark}

More generally, \cite{KrickLogar1991Radical} presented an algorithm to compute radicals of arbitrary ideals over perfect fields. For primary ideals, however, this procedure can be simplified: it is no longer necessary to determine a Noether position or to iterate through all algebraically independent subsets. The result below formalizes this simplification. While we formulate it for $\R$ and $\C$, it holds for all perfect fields.

Let $I \subseteq \R[x_1,\dots,x_n]$ be a primary ideal, and let $I_\C$ denote its complexification. Let $\Jcal \subseteq [n] := \{1,\dots,n\}$ be a maximal algebraically independent subset with respect to $I$, i.e.\ $I_\C \cap \C[\Jcal] = \{0\}$ and $\dim(I) = |\Jcal|$. Then the following theorem holds.

\begin{theorem}\label{thm: primary-2-prime}
Let notation be as above and define $k := \C(\Jcal)$, the field of fractions of $\C[\Jcal]$. If for every $a \in \Jcal^c$ there exists a polynomial $H_a \in I_\C \cap \C[\Jcal \cup \{a\}]$ such that $H_a(x_a), H_a'(x_a) \in k[x_a]$ are coprime, then $I_\C$ is prime. In particular, $I \subseteq \R[X]$ is also prime.
\end{theorem}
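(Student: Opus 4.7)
The strategy is to localize at $S=\C[\Jcal]\setminus\{0\}$ and work in the polynomial ring $k[X^*]$ over the perfect field $k=\C(\Jcal)$, where $X^*:=\{x_a:a\in\Jcal^c\}$. This turns the statement into a zero-dimensional radicality problem, to which Seidenberg's Lemma applies; a primary-decomposition argument then lifts the conclusion back to $\C[X]$ and $\R[X]$, with primariness of $I$ as the decisive ingredient.

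First I would verify that $J:=I_\C\cdot k[X^*]$ is zero-dimensional: maximality of $\Jcal$ (combined with the existence of each $H_a$) produces, for every $a\in\Jcal^c$, a nonzero univariate element of $J\cap k[x_a]$. The hypothesis says exactly that each such $H_a$ is strongly square-free in $k[x_a]$, so Seidenberg's Lemma over the perfect field $k$ yields that $J$ is radical in $k[X^*]$.

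The key lifting step deduces the prime conclusion from radicality of $J$. Write a minimal primary decomposition $I_\C=\bigcap_i\mathfrak{q}_i$ with associated primes $\mathfrak{P}_i=\sqrt{\mathfrak{q}_i}$. Since $\R[X]\hookrightarrow\C[X]$ is faithfully flat, every $\mathfrak{P}_i$ contracts to an associated prime of $I$; primariness of $I$ forces $\mathfrak{P}_i\cap\R[X]=P:=\sqrt{I}$, and integrality of $\C[X]$ over $\R[X]$ (going-up/going-down) forces $\dim\mathfrak{P}_i=\dim I=|\Jcal|$ for every $i$. In particular $\mathfrak{P}_i\cap\C[\Jcal]=\{0\}$, so no primary component is lost under localization: the extension--contraction $\mathfrak{a}\mapsto\mathfrak{a}\cdot k[X^*]\cap\C[X]$ acts as the identity on every $\mathfrak{q}_i$ and every $\mathfrak{P}_i$. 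Radicality of $J$ then gives $\mathfrak{q}_i\cdot k[X^*]=\mathfrak{P}_i\cdot k[X^*]$ in $k[X^*]$, and contracting yields $\mathfrak{q}_i=\mathfrak{P}_i$. Thus $I_\C=\bigcap_i\mathfrak{P}_i$ is radical, so $I=I_\C\cap\R[X]=\sqrt{I_\C}\cap\R[X]=\sqrt{I}$ is radical; a radical primary ideal is prime, so $I$ is prime, and the same chain of equalities transfers primeness to $I_\C$ via the complexification lemma once one observes that the associated primes $\mathfrak{P}_i$ collapse to a single one.

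I expect the main obstacle to be this lifting step, and in particular the claim that \emph{every} associated prime of $I_\C$ has top dimension $|\Jcal|$ and hence survives the localization. Without primariness of $I$, embedded or lower-dimensional components of $I_\C$ could vanish after the passage to $k[X^*]$ and be invisible to Seidenberg's criterion applied to $J$; it is precisely primariness that pins down the associated primes of $I_\C$ through the integral extension $\R[X]\subset\C[X]$ and makes the lifting argument go through.
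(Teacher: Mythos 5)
Your first half --- localizing at $S=\C[\Jcal]\setminus\{0\}$, using the $H_a$ to make the extended ideal zero-dimensional over $k$, and invoking Seidenberg's Lemma --- is exactly the paper's argument. The divergence is in the lifting step, where the paper avoids primary decomposition entirely: it proves $I_\C^E\cap\C[X]=I_\C$ by clearing denominators (if $fp_0\in I_\C$ with $0\neq p_0\in\C[\Jcal]$, then no power of $p_0$ lies in $I_\C$ since $I_\C\cap\C[\Jcal]=\{0\}$, so \emph{primariness of $I_\C$} forces $f\in I_\C$), and then concludes $f^r\in I_\C\Rightarrow f\in I_\C^E\cap\C[X]=I_\C$, so $I_\C$ is radical and primary, hence prime. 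Your route through associated primes, faithful flatness and integrality is heavier, and it contains two genuine gaps.

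First, the inference ``$\dim\mathfrak{P}_i=|\Jcal|$, in particular $\mathfrak{P}_i\cap\C[\Jcal]=\{0\}$'' is a non sequitur: a prime of the correct dimension can still meet $\C[\Jcal]$ nontrivially (e.g.\ $(x)\subset\C[x,y]$ has dimension $1=|\{x\}|$ but $(x)\cap\C[x]\neq\{0\}$); equal dimension does not make this \emph{particular} variable set algebraically independent modulo $\mathfrak{P}_i$. The claim is salvageable from the contraction you already established: if $\mathfrak{P}_i\cap\R[X]=\sqrt{I}$ and $\sqrt{I}\cap\R[\Jcal]=\{0\}$, then any $f\in\mathfrak{P}_i\cap\C[\Jcal]$ gives $f\bar{f}\in\mathfrak{P}_i\cap\R[\Jcal]=\{0\}$, hence $f=0$ --- but that is a different argument from the one you wrote. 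Second, and more seriously, the closing assertion that ``the associated primes collapse to a single one,'' so that primeness transfers to $I_\C$, is not proved and does not follow from primariness of $I$ alone: take $I=(x^2+1)\subset\R[x]$, which is prime, with $\Jcal=\emptyset$, $k=\C$, and $H=x^2+1$ strongly square-free; yet $I_\C=(x-i)\cap(x+i)$ is radical but not prime. So your argument (correctly) yields that $I$ is prime, but the stated conclusion about $I_\C$ genuinely requires $I_\C$ itself to be primary --- which is what the paper's proof uses and what is actually verified for $I_{2,\C}$ in Corollary~\ref{cor: primary}. Once one grants that $I_\C$ is primary, the detour through minimal primary decompositions is unnecessary: radical plus primary equals prime finishes immediately.
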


\begin{proof}
Let $R := k[\Jcal^c]$ and extend $I_\C$ to $R$ by setting $I_\C^E := R \cdot I_\C$. Since $\Jcal$ is a maximal algebraically independent subset, $\dim(I_\C^E) = 0$ in $R$. By Seidenberg's lemma, $I_\C^E$ is reduced, because for each $a \in \Jcal^c$ the polynomials $H_a, H_a' \in k[x_a]$ are coprime.

We claim that $I_\C^E \cap \C[X] = I_\C$. The inclusion $I_\C \subseteq I_\C^E \cap \C[X]$ is obvious. Conversely, if $f \in I_\C^E \cap \C[X] = (k[\Jcal^c] \cdot I_\C) \cap \C[X]$, then $f = p g$ for some $p \in k[\Jcal^c]$ and $g \in I_\C$. Write $p = p_1 / p_0$ with nonzero $p_0 \in \C[\Jcal]$ and $p_1 \in \C[X]$. Then $f p_0 = p_1 g \in I_\C$. Since $I_\C \cap \C[\Jcal] = \{0\}$, no power of $p_0$ lies in $I_\C$, and because $I_\C$ is primary, it follows that $f \in I_\C$.

Now suppose $f^r \in I_\C$. As $I_\C^E$ is reduced, $f \in I_\C^E$ and hence $f \in I_\C$. Thus $I_\C$ is both primary and reduced, which implies that it is prime. Consequently, $I \subseteq \R[X]$ is prime as well.
\end{proof}

\section{The Vanishing Ideal of the Nuclear Norm}\label{section: real prime}

Recall that for $p = 2s$, the unit ball of the nuclear $p$-norm can be expressed as the convex hull of the real variety of the ideal
\begin{equation*}
    I_p := \big\langle\, \sum_a x_a^p - 1,\ \text{rank-1 binomials} \,\big\rangle.
\end{equation*}
In this section we complete the proof of our second main result, Theorem~\ref{thm: intro-realprime}, showing that $I_2$ is a real reduced prime.

Although we cannot, in general, prove that $I_p$ is primary for all even $p = 2s$, this property can be verified computationally for any fixed tensor size using algebra systems such as \textsc{OSCAR}~\cite{OSCAR}. Assuming that $I_p$ is primary, our method extends to all such cases, implying that each $I_p$ is a real reduced prime whose real variety is irreducible.

The structure of this section is as follows.  
First, we recall the construction and symmetry properties of the nuclear $2$-norm unit ball and the corresponding ideal $I_2$.  
Second, we show that $I_2$ is a primary ideal.  
Next, under the assumption that $I_p$ is primary for all $p = 2s$, we use the result from Section~\ref{section: primary-2-prime} to conclude that each $I_p$ is prime.  
Finally, we establish real reducedness through a smoothness argument.  
In particular, we prove that $I_2$ coincides with the real vanishing ideal of its variety, which is of central interest in tensor applications.

Throughout this section we write $I_{p,\C}$ for the complexification of $I_p$.

\subsection{Construction and Symmetries of the Ideal \texorpdfstring{$I_2$}{}}\label{section: construction-symmetry-I2}

The unit ball of the nuclear norm is the convex hull of all unit rank-one tensors. The unit constraint is expressed by the polynomial
\[
    \sum_{a \in [\nbf]} x_a^2 - 1.
\]
Recall that the rank-one condition is captured by the ideal
\[
    I_0 = \big\langle\, x_a x_b - x_{a\vee b} x_{a\wedge b} : (a,b) \in \Scal \,\big\rangle,
\]
where
\[
    \Scal := \{ (a,b) \in [\nbf]^2 : \exists\, i,j \in [d] \text{ with } a_i > b_i,\ a_j < b_j \}.
\]
Hence,
\begin{equation}\label{eq: nuclear-ideal}
    I_2 = \big\langle\, \sum_{a\in[\nbf]} x_a^2 - 1 \,\big\rangle + I_0.
\end{equation}
The corresponding nuclear norm ball is
\[
    B_* = \conv\big(\Vcal_\R(I_2)\big),
\]
and the generating set in \eqref{eq: nuclear-ideal} also forms a Gr\"obner basis.

\paragraph{Symmetries.} The symmetries of the nuclear norm have been studied in several works;
see \cite{RoehrichZhou2025ThetaNuclear} and \cite{FriedlandLim2018NuclearNorm}. 
Let
\[
    G := \SOgroup(n_1) \times \cdots \times \SOgroup(n_d),
\]
which acts naturally on the tensor space $\R^\nbf$ by
\[
    (\tensorvec{U}{1} \times \cdots \times \tensorvec{U}{d}) \cdot (\puretensor{v}{d})
    := \tensorvec{U}{1} \tensorvec{v}{1} \otimes \cdots \otimes \tensorvec{U}{d} \tensorvec{v}{d}.
\]
It is immediate that the real variety $\Vcal_\R(I_2)$ is invariant under this action.

For the complex setting, define the complex special orthogonal group
\[
    \SOgroup(n,\C) := \{\, U \in \C^{n\times n} : U U^{T} = \Id \,\},
\]
and let
\[
    G_\C := \SOgroup(n_1,\C) \times \cdots \times \SOgroup(n_d,\C).
\]
Then the complex variety $\Vcal_\C(I_{2,\C})$ is precisely the $G_\C$-orbit of the tensor $e_1 \otimes \cdots \otimes e_1$, i.e.
\[
    \Vcal_\C(I_{2,\C}) = G_\C \cdot (e_1 \otimes \cdots \otimes e_1)
    = \big\{\, \puretensor{u}{d} : (\tensorvec{u}{i})^{T} \tensorvec{u}{i} = 1 \,\big\}.
\]
We next show that both the ideal $I_{2,\C}$ and its real counterpart $I_2$ are invariant under the corresponding group actions.

\begin{proposition}
Let the actions of $G$ and $G_\C$ be as above. Then both the ideals and their varieties are invariant under these actions:
\[
    g\cdot f(x) := f(g^{-1}x) \in I_{2,\C} \quad \forall\, f\in I_{2,\C},\ g\in G_\C,
\]
and similarly for the real case. Moreover, the actions of $G$ and $G_\C$ on their respective varieties are transitive.
\end{proposition}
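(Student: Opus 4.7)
The approach is to verify invariance on the generators of $I_{2,\C}$ and then handle transitivity by parameterizing the variety and reducing to the classical fact that $\SOgroup(n,\C)$ acts transitively on the bilinear unit sphere.

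For the ideal invariance, I would use that $f \mapsto f(g^{-1}x)$ is a ring automorphism, so it is enough to verify that the two families of generators of $I_{2,\C}$ are sent into $I_{2,\C}$. The quadric $\sum_a x_a^2 - 1$ is, up to the constant, the standard symmetric bilinear form $\langle T, T \rangle$ on $\C^{\nbf}$ evaluated on $T$ with itself, and the defining relation $U U^{T} = \Id$ of $\SOgroup(n_i,\C)$ makes this form $G_\C$-invariant. For the rank-one binomials I would invoke Theorem~\ref{thm: reduced-I0}: reducedness of $I_{0,\C}$ together with the complex Nullstellensatz gives $I_{0,\C} = I(\Vcal_\C(I_0))$, and the Segre cone $\Vcal_\C(I_0)$ is preserved by the action of $\mathrm{GL}(n_1,\C) \times \cdots \times \mathrm{GL}(n_d,\C)$, since a Segre product of invertible maps sends rank-one tensors to rank-one tensors; hence $\Vcal_\C(I_0)$ is a fortiori $G_\C$-invariant, and the ideal invariance follows. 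The real statement is identical, and invariance of the varieties is then immediate.

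For transitivity, I would write any $T \in \Vcal_\C(I_{2,\C})$ as a rank-one tensor $T = u_1 \otimes \cdots \otimes u_d$, using that $\Vcal_\C(I_0)$ is the Segre cone. The quadric constraint becomes $\prod_i u_i^{T} u_i = 1$, which forces every factor to be nonzero; rescaling by suitable $\lambda_i \in \C^{*}$ with $\prod_i \lambda_i = 1$ (absorbing any leftover sign into one $U_i$) reduces to the case $u_i^{T} u_i = 1$ for every $i$. It then suffices to produce $U_i \in \SOgroup(n_i,\C)$ with $U_i e_1 = u_i$ for each $i$, since then $(U_1,\ldots,U_d) \cdot (e_1 \otimes \cdots \otimes e_1) = T$. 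The real case runs the same way using ordinary Euclidean Gram--Schmidt.

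The delicate step, and the main obstacle, is the construction of these $U_i$ in the complex bilinear setting: because the form $x^{T}y$ is isotropic over $\C$, a naive Gram--Schmidt procedure can break down. The standard remedy is a complex Householder reflection applied to $v := u_i - e_1$, which works whenever $v^{T} v = 2(1 - (u_i)_1)$ is nonzero, together with a two-reflection composition in the remaining degenerate case; this is exactly the classical statement (a special case of Witt's extension theorem) that $\mathrm{O}(n,\C)$ acts transitively on $\{x \in \C^n : x^{T}x = 1\}$ for $n \geq 1$. A final cosmetic reflection inside $u_i^{\perp}$ adjusts the determinant to $+1$ for $n_i \geq 2$, placing us in $\SOgroup(n_i,\C)$ and completing the argument.
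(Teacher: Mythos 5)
Your argument is correct, and for the invariance claims it is essentially the paper's own: the paper likewise observes that the quadric $\sum_a x_a^2-1$ is fixed because $G$ sits inside the orthogonal group, and deduces invariance of the rank-one ideal from Theorem~\ref{thm: reduced-I0} together with the Nullstellensatz and the invariance of the rank-one variety under the group action. Where you genuinely differ is on transitivity: the paper's proof does not argue this at all (it is asserted via the orbit description $\Vcal_\C(I_{2,\C})=G_\C\cdot(e_1\otimes\cdots\otimes e_1)$ displayed just before the proposition), whereas you supply the missing details --- writing a point of the variety as $u_1\otimes\cdots\otimes u_d$ with $\prod_i u_i^Tu_i=1$, normalizing each factor to $u_i^Tu_i=1$ by scalars with product $1$, and using Householder reflections (Witt extension) to move $e_1$ to $u_i$ inside $\SOgroup(n_i,\C)$, with a final determinant correction. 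That is the right argument and a worthwhile addition, since over $\C$ the isotropy of the bilinear form is exactly the point where a naive Gram--Schmidt fails. One caveat you only half-close: if some $n_i=1$, then $\SOgroup(1,\C)=\{1\}$ cannot absorb a sign $u_i=-1$, so that sign must be pushed onto a factor with $n_j\ge 2$ (and if every $n_i=1$ the transitivity claim fails outright); the paper is silent on this degenerate case as well, so it is worth one sentence, not a rework.
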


\begin{proof}
The invariance of the varieties follows directly from the definitions of the actions. To verify the invariance of $I_2$, note first from Theorem~\ref{thm: reduced-I0} that $I_0$ is real reduced and hence the vanishing ideal of the set of real rank-one tensors. This variety is $G$-invariant, and by the Real Nullstellensatz, so is $I_0$. Since $G$ is a subgroup of the orthogonal group, the polynomial $\sum_{a\in[\nbf]} x_a^2 - 1$ is fixed under its action; hence $\langle\, \sum_a x_a^2 - 1 \,\rangle$ is $G$-invariant as well. As the sum of two $G$-invariant ideals, $I_2$ is $G$-invariant. The same reasoning applies to $G_\C$ and $I_{2,\C}$, completing the proof.
\end{proof}

\subsection{The Ideal \texorpdfstring{$I_2$}{} is Primary}\label{section: I2-primary}

We begin with a general structural result for algebraically closed fields concerning transitive varieties and their invariant ideals.

\begin{theorem}
Let $\Kbb$ be an algebraically closed field, and let $G$ be a group acting on the affine space $\Kbb^n$. Suppose $V \subseteq \Kbb^n$ is a $G$-transitive variety defined by a $G$-invariant ideal $I \subset \Kbb[X]$, i.e.\ $V = \Vcal_\Kbb(I)$. If $V$ is connected in the Euclidean topology, then $V$ is irreducible and $I$ is primary.
\end{theorem}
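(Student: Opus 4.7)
The plan is to prove the two conclusions separately, using a common mechanism: the group $G$ permutes both the irreducible components of $V$ and the associated primes of $I$, and $G$-transitivity collapses each of these finite sets to a single element.

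For the irreducibility of $V$, I would write the irreducible decomposition $V = V_1 \cup \cdots \cup V_r$. Since $G$ acts on $\Kbb^n$ by regular automorphisms (implicit in the $G$-invariance of an ideal of $\Kbb[X]$), each $g \in G$ sends an irreducible closed subset of $V$ to an irreducible closed subset of $V$, and hence permutes $\{V_1,\ldots,V_r\}$. Using generic points $p_i \in V_i \setminus \bigcup_{j\neq i} V_j$, which exist because each $V_i$ is a maximal irreducible subset, the $G$-transitivity on points lifts to a transitive action on components. Suppose $r \geq 2$. Then Euclidean connectedness of $V$ forces $V_1 \cap (V_2 \cup \cdots \cup V_r) \neq \emptyset$, so some $p_0 \in V$ lies on at least two distinct components. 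For any $q = g \cdot p_0 \in V$, the fact that $g$ permutes components implies $q$ again lies on at least two components. This contradicts the existence of the generic point $p_1 \in V_1$, so $r = 1$ and $V$ is irreducible.

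For the primarity of $I$, irreducibility of $V$ gives $\sqrt{I} = I(V) =: P$ prime, and $P$ is the unique minimal associated prime of $I$. It remains to exclude embedded primes. Since $g \cdot I = I$ and $G$ acts by ring automorphisms, the identity $g \cdot \mathrm{ann}(f+I) = \mathrm{ann}(g \cdot f + I)$ shows that $G$ permutes the associated primes of $I$. Suppose embedded primes $P'_1, \ldots, P'_k$ exist. Their varieties $\Vcal_\Kbb(P'_j) \subsetneq V$ are proper closed irreducible subsets, and the union $W := \Vcal_\Kbb(P'_1) \cup \cdots \cup \Vcal_\Kbb(P'_k)$ is $G$-invariant. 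If $k \geq 1$, then $W$ is a nonempty $G$-invariant subset of $V$, forcing $W = V$ by transitivity; but the irreducible variety $V$ cannot be a finite union of proper closed subvarieties. Hence $k = 0$, and $I$ is $P$-primary.

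The only place where the topological hypothesis is genuinely used is the step forcing two distinct components of $V$ to intersect when $r \geq 2$; Zariski connectedness would suffice, but in the intended applications Euclidean connectedness is what one verifies directly (orbits of a complex connected Lie group are Euclidean path-connected). The organisational main obstacle is to keep three compatible $G$-actions straight at once---on points of $V$, on the irreducible components of $V$, and on the associated primes of $I$---and to check that each group-theoretic step on one side corresponds correctly to the algebraic step on the other. Once this bookkeeping is set up, both conclusions follow in parallel from the same pigeonhole principle: a finite $G$-stable collection of proper subobjects inside a $G$-transitive object must be empty.
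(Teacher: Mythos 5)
Your proposal is correct and follows essentially the same route as the paper: $G$ permutes the finitely many irreducible components (resp.\ associated primes), generic points plus transitivity plus Euclidean (hence Zariski) connectedness kill a nontrivial decomposition, and transitivity plus irreducibility of $V$ collapses the associated primes to the single minimal one $\sqrt{I}$. The only cosmetic difference is that you split the associated primes into minimal versus embedded and derive a contradiction from a proper $G$-invariant union, whereas the paper shows directly that $V=\bigcup_{r\in G} r\cdot V_P$ forces $V_P=V$ for every associated prime $P$.
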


\begin{proof}
Assume that $V = V_1 \cup \cdots \cup V_k$ is the minimal irreducible decomposition of $V$, and suppose $k > 1$. Choose points $x_i \in V_i$ such that $x_i \notin \bigcup_{j \neq i} V_j$. By $G$-transitivity,  each $x_i$ lies in exactly one irreducible component $V_i$, 
and hence every point of $V$ belongs to exactly one irreducible component. 
Hence the decomposition is disjoint, implying that $V$ is disconnected in the Zariski topology and therefore also disconnected in the Euclidean topology—a contradiction. Thus $V$ must be irreducible.

To show that $I$ is primary, recall that this is equivalent to $\Kbb[X]/I$ having a unique associated prime \cite{AtiyahMacdonald1969CommAlg}. Let $P$ be an associated prime of $\Kbb[X]/I$ with respect to $f$, so that $Pf \subset I$. For any $r \in G$, the ideal $r \cdot P$ is an associated prime corresponding to $r \cdot f$, since $I$ is $G$-invariant. Denote $V_P = \Vcal_\Kbb(P)$, an irreducible subvariety of $V$.  

Pick any $x \in V_P$. By transitivity, for every $y \in V$, there exists $r_y \in G$ such that $r_y \cdot x = y$, implying that $y \in r_y \cdot V_P = V_{r_y \cdot P}$. Hence
\[
V = \bigcup_{r \in G} r \cdot V_P.
\]
Since $\Kbb[X]/I$ has only finitely many associated primes, this union is finite. The irreducibility of $V$ then forces $P$ to be $G$-invariant and $V_P = V$. Consequently, $\Kbb[X]/I$ has exactly one associated prime, namely $\sqrt{I}$, and therefore $I$ is primary.
\end{proof}

Now let $I = I_{2,\C}$. Since the complex special orthogonal group $\SOgroup(n,\C)$ is connected \cite{FultonHarris1991RepresentationTheory}, it follows that the product group
\[
G_\C = \SOgroup(n_1,\C) \times \cdots \times \SOgroup(n_d,\C)
\]
is connected as well. In particular, the orbit $\Vcal_\C(I_\C)$ is connected in the Euclidean topology. Hence we obtain the following corollary.

\begin{corollary}\label{cor: primary}
Both $I_{2,\C}$ and $I_2$ are primary, and the varieties $\Vcal_\C(I_{2,\C})$ and $\Vcal_\R(I_2)$ are irreducible.
\end{corollary}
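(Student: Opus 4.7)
The plan is to deduce this corollary as a direct instantiation of the preceding theorem, verifying its hypotheses for the action of $G_\C$ on $\C^{[\nbf]}$ and the ideal $I_{2,\C}$, and then transferring the primary property back to $\R[X]$ through the complexification lemma of Section~\ref{section: complexification}.

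First, I would note that all but one of the hypotheses of the theorem are already in hand: the proposition in Section~\ref{section: construction-symmetry-I2} shows both that $I_{2,\C}$ is $G_\C$-invariant and that $\Vcal_\C(I_{2,\C})$ is a single $G_\C$-orbit. The only remaining ingredient is Euclidean connectedness of that orbit. This follows from the standard fact that $\SOgroup(n,\C)$ is connected \cite{FultonHarris1991RepresentationTheory}, so the product $G_\C = \SOgroup(n_1,\C)\times\cdots\times\SOgroup(n_d,\C)$ is connected, and $\Vcal_\C(I_{2,\C}) = G_\C\cdot (e_1\otimes\cdots\otimes e_1)$ is the polynomial (hence continuous) image of a connected set. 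Applying the preceding theorem then yields simultaneously that $I_{2,\C}$ is primary and $\Vcal_\C(I_{2,\C})$ is irreducible. The complexification lemma from Section~\ref{section: complexification} immediately transfers primariness to $I_2\subseteq\R[X]$.

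For the irreducibility of the real variety $\Vcal_\R(I_2)$, I would run the same scheme over $\R$. The real Lie group $G = \SOgroup(n_1)\times\cdots\times\SOgroup(n_d)$ is connected (each $\SOgroup(n_i)$ being a connected real Lie group), and it acts transitively on $\Vcal_\R(I_2)$, so the real variety is Euclidean-connected. The combinatorial argument of the preceding theorem—decomposing the variety into irreducible components, using $G$-transitivity to show that the decomposition is disjoint, and deriving a contradiction from Euclidean connectedness—carries over \emph{verbatim} to the real Zariski topology, as it only uses that the ambient topology is Noetherian together with a Nullstellensatz, both of which are available in the real setting (with the real Nullstellensatz replacing Hilbert's).

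The main and essentially only obstacle is bookkeeping: one must verify that the irreducible-decomposition argument of the preceding theorem truly transfers to the real setting, and in particular that $G$ permutes the real irreducible components of the orbit (which is automatic, since the irreducible decomposition is canonical and $G$ acts by real algebraic automorphisms). Beyond this minor check, the corollary is a clean specialization of the preceding theorem, with no further computation required.
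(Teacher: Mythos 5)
Your proposal is correct and follows essentially the same route as the paper: apply the preceding theorem to $I_{2,\C}$ using connectedness of $\SOgroup(n,\C)$ (hence of $G_\C$ and of the orbit $\Vcal_\C(I_{2,\C})$), then transfer primariness to $I_2$ via the complexification lemma. Your extra care in rerunning the connectedness/transitivity argument over $\R$ to get irreducibility of $\Vcal_\R(I_2)$ is a useful supplement, since the theorem is stated only for algebraically closed fields and the paper leaves this step implicit.
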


\subsection{The Ideal \texorpdfstring{$I_2$}{I2} is Prime}\label{section:I2-prime}

We now apply Theorem~\ref{thm: primary-2-prime} to show that $I_2$ is prime. Since the previous subsection established that $I_2$ and $I_{2,\C}$ are primary, it remains to exhibit a maximal algebraically independent subset of variables and, for each remaining variable, a strongly square-free elimination polynomial in the sense of Theorem~\ref{thm: primary-2-prime}. 
The same construction works for all even numbers $p=2s>0$; thus, once $I_p$ is known to be primary, the argument below implies that $I_p$ is prime as well. 
As algebraic independence is preserved under complexification, we argue over~$\R$; all conclusions carry over to~$I_{p,\C}$. For brevity, write $\Vcal(I_p)=\Vcal_\R(I_p)$.

Let the tensor size be $\nbf=(n_1,\dots,n_d)$. Set
\begin{eqnarray}\label{eq:Jcal}
\Jcal_0 := \big\{ a\in[\nbf] : \big|\{i\in[d] : a_i \neq 1\}\big| \le 1 \big\}, \qquad
\Jcal := \Jcal_0 \setminus \{(1,\dots,1,n_d)\}.
\end{eqnarray}
We first show that $\Jcal$ is algebraically independent for every even $p=2s>0$.

\begin{proposition}\label{prop: alg.ind.set}
Let $p=2s>0$ and $\Jcal\subset[\nbf]$ be as above. Then $\R[\Jcal]\cap I_p=\{0\}$, i.e.\ $\Jcal$ is algebraically independent with respect to $I_p$.
\end{proposition}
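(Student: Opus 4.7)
The plan is to show that the image of the $\Jcal$-coordinate projection $\pi\colon\Vcal_\R(I_p)\to\R^\Jcal$ contains a Euclidean-open subset of $\R^\Jcal$. Any $f\in I_p\cap\R[\Jcal]$ vanishes on $\Vcal_\R(I_p)$ and hence on $\pi(\Vcal_\R(I_p))$; since a real polynomial vanishing on a Euclidean-open subset of $\R^{|\Jcal|}$ is identically zero, this forces $f=0$.

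\textbf{Step 1 (parametrization).} I parametrize a piece of $\Vcal_\R(I_p)$ by rank-one unit tensors using the gauge $u^{(j)}_1=1$ for $j\ge 2$. The identity $\sum_{a\in[\nbf]}x_a^{2s}=\prod_{i=1}^d\sum_{k=1}^{n_i}(u^{(i)}_k)^{2s}$, valid for rank-one tensors $\bigotimes u^{(i)}$, turns the norm-one condition into $\prod_i\|u^{(i)}\|_{2s}^{2s}=1$. I introduce parameters $c:=u^{(1)}_1$, $r_{1,k}:=u^{(1)}_k$ ($k\geq 2$), and $r_{i,k}:=u^{(i)}_k\cdot u^{(1)}_1$ ($i\geq 2$, $k\geq 2$), totaling $\sum_i n_i-d+1$ coordinates. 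Direct substitution gives $x_{(1,\dots,1)}=c$ and, for $k\geq 2$ in position $i$, $x_{(1,\dots,1,k,1,\dots,1)}=r_{i,k}$; in particular $x_{(1,\dots,1,n_d)}=r_{d,n_d}$. Hence the $\Jcal$-coordinates correspond bijectively to $c$ and the $r_{i,k}$ with $(i,k)\neq(d,n_d)$, giving exactly $|\Jcal|$ of them, while $r_{d,n_d}$ is the single extra parameter.

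\textbf{Step 2 (implicit function theorem).} I pick the base point $u^{(1)}=\alpha e_1$, $u^{(j)}=e_1$ for $2\leq j\leq d-1$, $u^{(d)}=e_1+\gamma e_{n_d}$, with $\alpha,\gamma>0$ chosen so that $\alpha^{2s}(1+\gamma^{2s})=1$; this lies in $\Vcal_\R(I_p)$. Writing $F:=\sum_a x_a^{2s}-1$, every $a$ with $a_d=n_d$ and some $a_i\geq 2$ ($i<d$) contributes zero to $\partial_{r_{d,n_d}} F$ at the base point, because the corresponding factor $u^{(i)}_{a_i}$ vanishes there; only $a=(1,\dots,1,n_d)$ survives, giving $\partial_{r_{d,n_d}} F = 2s\,r_{d,n_d}^{2s-1}=2s(\alpha\gamma)^{2s-1}\neq 0$. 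The implicit function theorem then solves $F=0$ locally for $r_{d,n_d}$ as a smooth function of the remaining $|\Jcal|$ parameters, so $\pi$ restricts to a local diffeomorphism onto a Euclidean-open neighborhood of $(\alpha,0,\dots,0)\in\R^\Jcal$.

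The delicate point is the choice of base: at the superficially natural $e_1^{\otimes d}$ one has $r_{d,n_d}=0$, so $\partial_{r_{d,n_d}} F$ vanishes and the implicit function theorem fails. Switching on a single extra coordinate $u^{(d)}_{n_d}=\gamma>0$ is the minimal perturbation that lifts this degeneracy. The remaining bookkeeping — expanding the rank-one product, identifying parameters with $\Jcal$-coordinates, and tracking the excluded index — is routine.
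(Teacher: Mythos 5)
Your proof is correct and follows the same strategy as the paper: both arguments show that the $\Jcal$-coordinate projection of $\Vcal_\R(I_p)$ contains a nonempty Euclidean-open subset of $\R^\Jcal$, which forces any $f\in I_p\cap\R[\Jcal]$ to vanish identically. The only difference is the device used to certify openness — the paper linearizes the rank-one parametrization via $u^{(i)}_j=e^{v^{(i)}_j}$ on the positive orthant and checks that the resulting linear map has full rank before adjusting $u^{(d)}_{n_d}$ to meet the norm constraint, whereas you gauge-fix the parametrization and apply the implicit function theorem at a base point perturbed in the $(1,\dots,1,n_d)$ direction (correctly observing that the unperturbed point $e_1\otimes\cdots\otimes e_1$ is degenerate for this purpose); both verifications are valid.
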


\begin{proof}
By elimination/projection, $\pi_\Jcal(\Vcal(I_p)) \subseteq \Vcal(I_p\cap\R[\Jcal])$. We claim that $\pi_\Jcal(\Vcal(I_p))$ contains a nonempty Euclidean open set; hence it is Zariski dense in $\R^\Jcal$, so $\Vcal(I_p\cap\R[\Jcal])=\R^\Jcal$ and therefore $I_p\cap\R[\Jcal]=\{0\}$ by the Real Nullstellensatz.

Let $T=u^{(1)}\otimes\cdots\otimes u^{(d)}$ be rank one with $u^{(i)}\in\R^{n_i}$ and impose $\prod_{i=1}^d\|u^{(i)}\|_p=1$. Then
\[
\pi_\Jcal(\Vcal(I_p))=\Big\{\big(\prod_{i=1}^d u^{(i)}_{a_i}\big)_{a\in\Jcal} : \prod_i \|u^{(i)}\|_p=1\Big\}.
\]
Assume $u^{(i)}_{a_i}>0$ for the coordinates that appear. Writing $u^{(i)}_j=e^{v^{(i)}_j}$ and noting that $a_d\neq n_d$ for $a\in\Jcal$, we obtain the linear map
\[
L:\R^{n_1}\times\cdots\times\R^{\,n_d-1}\to\R^\Jcal,\qquad (v^{(1)},\dots,v^{(d)})\mapsto \Big(\sum_{i=1}^d v^{(i)}_{a_i}\Big)_{a\in\Jcal}.
\]
The rows of $L$ are linearly independent: for $a\neq(1,\dots,1)$, exactly one variable $v^{(i)}_j$ with $j>1$ appears only in the row indexed by $a$, and the row for $(1,\dots,1)$ involves the $v^{(i)}_1$’s. Hence $L$ has full rank, is open, and so is the componentwise exponential. Choosing
\[
U := (-\infty,0)^{n_1}\times\cdots\times(-\infty,0)^{n_{d-1}}\times \log\!\Big(\big\{u\in\R^{n_d-1}: u_j>0,\ \|u\|_p^p<\prod_{i<d} n_i^{-1}\big\}\Big),
\]
we can adjust $u^{(d)}_{n_d}$ to meet $\prod_i\|u^{(i)}\|_p=1$, hence $\exp\!\circ L(U)\subset \pi_\Jcal(\Vcal(I_p))$ is open. The claim follows.
\end{proof}

To prove that $\Jcal$ is maximal, we must, for every $a\in\Jcal^c$, produce a nonzero polynomial in $I_p\cap \R[\Jcal\cup\{a\}]$ which is strongly square-free in~$x_a$. Let $k=\R(\Jcal)$ denote the rational function field.

\begin{proposition}\label{prop: square-free-poly}
Let $p=2s>0$. For each $a\in\Jcal^c$ there exists $H_{p,a}\in I_p\cap \R[\Jcal\cup\{a\}]$ such that $H_{p,a}(x_a),\, H'_{p,a}(x_a)\in k[x_a]$ are coprime.
\end{proposition}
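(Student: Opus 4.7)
The plan is to construct $H_{p,a}$ explicitly by combining two ingredients: the iterated rank-$1$ identity
\[
x_b\, x_{\mathbf{1}}^{d-1} - \prod_{i=1}^d x_{e_i(b)} \in I_0 \subseteq I_p, \qquad b \in [\nbf],
\]
where $\mathbf{1} := (1,\ldots,1)$, $w := (1,\ldots,1,n_d)$, and $e_i(b) := (1,\ldots,1,b_i,1,\ldots,1)$ is the index with $b_i$ in position $i$ and $1$'s elsewhere; and the unit constraint $\sum_b x_b^p - 1 \in I_p$. I would first prove the identity by induction on $d$, peeling off one coordinate at a time via binomials of the form $x_c x_{c'} - x_{c \vee c'} x_{c \wedge c'}$, and then raise it to the $p$-th power using $X^p - Y^p = (X-Y)(X^{p-1} + \cdots + Y^{p-1})$ to obtain $x_b^p\, x_{\mathbf{1}}^{p(d-1)} - \prod_i x_{e_i(b)}^p \in I_0$.

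When $a_d \neq n_d$, every $e_i(a)$ lies in $\Jcal$, since $e_i(a) = w$ would force $i = d$ and $a_d = n_d$. I then take $H_{p,a} := x_a\, x_{\mathbf{1}}^{d-1} - \prod_i x_{e_i(a)} \in I_0 \cap \R[\Jcal \cup \{a\}]$, which is linear in $x_a$ with nonzero derivative $x_{\mathbf{1}}^{d-1}$ in $k = \R(\Jcal)$, hence trivially strongly square-free.

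When $a_d = n_d$ (which covers both $a = w$ and the case $a \notin \Jcal_0$), I would sum the $p$-th power identity over all $b \in [\nbf]$ and invoke the unit constraint. A short rearrangement based on $\sum_b \prod_i x_{e_i(b)}^p = \prod_i S_i$ (by distributivity over the independent indices $b_i$) yields
\[
G := \prod_{i=1}^d S_i - x_{\mathbf{1}}^{p(d-1)} \in I_p \cap \R[\Jcal_0], \qquad S_i := \sum_{j=1}^{n_i} x_{(1,\ldots,j,\ldots,1)}^p.
\]
Writing $S_d = x_w^p + S_d'$ with $S_d' \in \R[\Jcal]$ gives $G = A\, x_w^p + B$, where $A := \prod_{i<d} S_i$ and $B := A\, S_d' - x_{\mathbf{1}}^{p(d-1)}$. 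For $a = w$ I set $H_{p,w} := G$. For $a \neq w$ with $a_d = n_d$, I use the rank-$1$ consequence $x_a x_{\mathbf{1}}^{d-1} - x_w D \in I_0$ with $D := \prod_{i<d} x_{e_i(a)} \in \R[\Jcal]$; its $p$-th power gives $x_w^p D^p \equiv x_a^p x_{\mathbf{1}}^{p(d-1)} \pmod{I_0}$, and multiplying $G$ by $D^p$ and substituting produces
\[
H_{p,a} := A\, x_{\mathbf{1}}^{p(d-1)}\, x_a^p + D^p B \in I_p \cap \R[\Jcal \cup \{a\}].
\]

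In both subcases of the second case, $H_{p,a}$ has the form $\alpha x_a^p + \beta$ with $\alpha, \beta \in \R[\Jcal]$, so $H'_{p,a} = p\alpha\, x_a^{p-1}$, and (using $p > 0$ and $\mathrm{char}\,\R = 0$) $\gcd(H_{p,a}, H'_{p,a}) = \gcd(\beta, x_a^{p-1}) = 1$ provided $\alpha, \beta \neq 0$ in $k$. The leading coefficient $\alpha$ is visibly nonzero; I expect the main technical point to be checking that the constant term $\beta$ is nonzero as an element of $\R[\Jcal]$. My plan is to verify this by specializing all $\Jcal$-variables other than $x_{\mathbf{1}}$ to $0$, which collapses $B$ to $x_{\mathbf{1}}^{p(d-1)}(x_{\mathbf{1}}^p - 1) \neq 0$; since $D$ is a nonzero product of $\Jcal$-variables, this also yields $D^p B \neq 0$.
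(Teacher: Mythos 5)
Your proposal is correct and follows essentially the same strategy as the paper: the iterated rank-one identity $x_a x_{\mathbf 1}^{d-1}\equiv\prod_i x_{e_i(a)}$ handles $a_d\neq n_d$, and for $a_d=n_d$ the unit constraint is combined with that identity (raised to the $p$-th power) and the relation $x_w D\equiv x_a x_{\mathbf 1}^{d-1}$ to produce a polynomial of the form $\alpha x_a^{p}+\beta$ with $\alpha,\beta\in\R[\Jcal]\setminus\{0\}$. Your organization of the second case via $\prod_i S_i - x_{\mathbf 1}^{p(d-1)}\in I_p\cap\R[\Jcal_0]$, and in particular the specialization argument showing $B\neq 0$, is actually more explicit than the paper's ``collecting terms'' step, which merely asserts $f\neq 0\neq g$.
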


Before the proof, recall the rank-one binomials $x_ax_b-x_{a\vee b}x_{a\wedge b}$ for $a,b\in[\nbf]$. In particular, with $b=(1,\dots,1)$, one gets
\[
x_a x_{1\cdots1} \equiv x_{a_1\,1\cdots1}\,x_{1\,a_2\,\cdots a_d}\pmod{I_p},
\]
and iterating,
\begin{equation}\label{eq:x1dxa}
x_{1\cdots1}^{\,d-1}x_a \equiv g_a(x) := x_{a_1\,1\cdots1}\,x_{1\,a_2\,1\cdots1}\cdots x_{1\cdots1\,a_d}\pmod{I_p}.
\end{equation}

\begin{proof}[Proof of Proposition~\ref{prop: square-free-poly}]
We distinguish two cases. Throughout, write $f=g$ to mean $f\equiv g\pmod{I_p}$.

\emph{Case 1: $a\in\Ical_1:=\{a\in[\nbf]: |\{i: a_i\neq 1\}|\ge 2,\ a_d\neq n_d\}$.} By \eqref{eq:x1dxa},
\[
H_{p,a}(x_a):=x_{1\cdots1}^{\,d-1}x_a - g_a(x)\in I_p\cap \R[\Jcal][x_a].
\]
Then $H'_{p,a}(x_a)=x_{1\cdots1}^{\,d-1}\in k[x_a]$, and $H_{p,a},H'_{p,a}$ are coprime.

\emph{Case 2: $a_d=n_d$.} With $p=2s$, consider
\[
x_{1\cdots1}^{\,2s(d-1)}\!\Big(1-\sum_{b\in\Jcal}x_b^{2s}\Big)\prod_{i\ne d} x_{1\cdots1\,a_i\,1\cdots1}^{\,2s}
= x_{1\cdots1}^{\,2s(d-1)}\!\Big(\sum_{b\in\Jcal^c}x_b^{2s}\Big)\prod_{i\ne d} x_{1\cdots1\,a_i\,1\cdots1}^{\,2s}\in \R[\Jcal].
\]
Split the sum over $\Jcal^c$ into $\Ical_1\cup(\Jcal^c\setminus(\Ical_1\cup\{a\}))\cup\{a\}$. For $b\in\Ical_1$, \eqref{eq:x1dxa} rewrites $x_b$ in $\R[\Jcal]$; for $b\neq a$ with $b_d=n_d$, \eqref{eq:x1dxa} still applies, and the remaining factor $x_{1\cdots1\,n_d}$ can be eliminated using the reverse of \eqref{eq:x1dxa}:
\[
\prod_{i<d} x_{1\cdots1\,a_i\,1\cdots1}\cdot x_{1\cdots1\,n_d} = x_{1\cdots1}^{\,d-1}x_a \in \R[\Jcal\cup\{a\}].
\]
Collecting terms yields polynomials $f(x),g(x)\in \R[\Jcal]$, with $f$ a sum of squares and $g$ a nontrivial combination of sums of squares of different degree, such that
\[
H_{p,a}(x_a):= f(x)\,x_a^{2s} - g(x)\in I_p\cap \R[\Jcal\cup\{a\}].
\]
Then $H'_{p,a}(x_a)=2s\,f(x)\,x_a^{2s-1}$, and $f\neq0\neq g$ implies $\gcd(H_{p,a},H'_{p,a})=1$ in $k[x_a]$.
\end{proof}

\begin{example}
Let $p = 2$, the nuclear norm.
For $3\times3$ matrices (variables $x_{ij}$), the rank-one binomials are the $2\times2$ minors. One maximal algebraically independent index set is $\Jcal=\{(1,1),(1,2),(2,1),(3,1)\}$. Here $\Jcal^c=\{(2,2),(3,2)\}\cup\{(1,3),(2,3),(3,3)\}$ and $\Ical_1=\{(2,2),(3,2)\}$. For $a=(2,2)$, the minor gives
\[
H_a=x_{11}x_{22}-x_{12}x_{21}.
\]
For $a=(2,3)\notin\Ical_1$, compute
\[
x_{11}^2\big(1-x_{11}^2-x_{12}^2-x_{21}^2-x_{31}^2\big)x_{21}^2
= x_{11}^2\big[(x_{22}^2+x_{32}^2)+(x_{13}^2+x_{33}^2)+x_{23}^2\big]x_{21}^2,
\]
whose left side lies in $\R[\Jcal] = \R[x_{11},x_{12},x_{21},x_{31}]$. The terms with indices in $\Ical_1 = \{(2,2),(3,2)\}$ reduce via minors; the factor $x_{11}^2x_{33}^2=x_{13}^2x_{31}^2$ introduces $(1,3)\not\in \Jcal$, which we eliminate using the minor relation $x_{13}x_{21}=x_{11}x_{23}$. Collecting terms yields $H_a\in I_2\cap \R[\Jcal\cup\{a\}]$.
\end{example}

\begin{theorem}
Let $\Jcal$ be as in \eqref{eq:Jcal} and $I_2$ as in \eqref{eq: nuclear-ideal}. Then $I_2$ is prime, with
\[
\dim(I_2)=|\Jcal|=\sum_{i=1}^d n_i - d.
\]
\end{theorem}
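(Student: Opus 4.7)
The plan is to assemble the ingredients already proved in this section. First, Corollary~\ref{cor: primary} gives that $I_2$ and $I_{2,\C}$ are primary. Second, Proposition~\ref{prop: alg.ind.set} (applied with $p=2$) shows $\R[\Jcal]\cap I_2=\{0\}$, so $\Jcal$ is algebraically independent with respect to $I_2$ and $\dim(I_2)\ge|\Jcal|$. Third, Proposition~\ref{prop: square-free-poly} supplies, for every $a\in\Jcal^c$, a polynomial $H_{2,a}\in I_2\cap\R[\Jcal\cup\{a\}]$ that is strongly square-free in $x_a$ over $k=\R(\Jcal)$; in particular, each $a\in\Jcal^c$ is algebraically dependent on $\Jcal$ modulo $I_2$, so $\Jcal$ is a \emph{maximal} algebraically independent subset and hence $\dim(I_2)=|\Jcal|$.

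To promote primarity to primality, I would invoke Theorem~\ref{thm: primary-2-prime} (with $\Kbb=\R$, which is perfect). Its hypotheses are exactly: $I_2$ primary, a maximal algebraically independent subset $\Jcal$, and for each $a\in\Jcal^c$ a strongly square-free $H_{2,a}\in I_{2,\C}\cap\C[\Jcal\cup\{a\}]$. The first two are the items above; for the third, note that the polynomials $H_{2,a}$ from Proposition~\ref{prop: square-free-poly} have real coefficients, so they lie in $I_{2,\C}\cap\C[\Jcal\cup\{a\}]$, and the coprimality of $H_{2,a}$ and $H'_{2,a}$ in $\R(\Jcal)[x_a]$ transfers to $\C(\Jcal)[x_a]$ because extension of the base field from $\R(\Jcal)$ to $\C(\Jcal)$ preserves the gcd of univariate polynomials (up to a unit). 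Theorem~\ref{thm: primary-2-prime} then yields that $I_{2,\C}$ is prime, and the lemma of Section~\ref{section: complexification} descends primality to $I_2\subseteq\R[X]$.

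The dimension formula is then the combinatorial bookkeeping $|\Jcal|=|\Jcal_0|-1$: the set $\Jcal_0$ of axis-type indices satisfies
\[
|\Jcal_0|=1+\sum_{i=1}^d(n_i-1)=\sum_{i=1}^d n_i-d+1,
\]
since it consists of the all-ones index together with, for each coordinate direction $i$, the $n_i-1$ indices differing from $(1,\dots,1)$ only in the $i$-th slot. Subtracting the single removed element $(1,\dots,1,n_d)$ gives $|\Jcal|=\sum_i n_i-d$, completing the proof.

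I do not anticipate a genuine obstacle: all the real work was absorbed into the primarity statement (Corollary~\ref{cor: primary}), the explicit witnesses in Proposition~\ref{prop: alg.ind.set} and Proposition~\ref{prop: square-free-poly}, and the primary-to-prime criterion (Theorem~\ref{thm: primary-2-prime}). The only subtlety worth flagging is the real-to-complex transition for the square-free condition, which is immediate from the invariance of univariate gcd under field extension.
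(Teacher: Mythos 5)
Your proposal follows exactly the paper's own argument: combine Corollary~\ref{cor: primary} (primarity), Proposition~\ref{prop: alg.ind.set} and Proposition~\ref{prop: square-free-poly} (maximal algebraically independent set plus strongly square-free elimination polynomials), and then apply Theorem~\ref{thm: primary-2-prime}; the count $|\Jcal|=\sum_i n_i - d$ is the same bookkeeping. The extra details you supply (maximality of $\Jcal$ via the $H_{2,a}$, and the invariance of the univariate gcd under the extension $\R(\Jcal)\subset\C(\Jcal)$) are correct and merely make explicit what the paper leaves implicit.
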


\begin{proof}
By Proposition~\ref{prop: alg.ind.set} and Proposition~\ref{prop: square-free-poly}, the set $\Jcal$ is maximal algebraically independent and, for each $a\in\Jcal^c$, there exists a strongly square-free elimination polynomial as required. Since $I_2$ and $I_{2,\C}$ are primary (Corollary~\ref{cor: primary}), Theorem~\ref{thm: primary-2-prime} applies and yields that $I_2$ is prime. The dimension formula follows from $|\Jcal|$.
\end{proof}

\begin{remark}
For general even $p=2s>0$, if $I_p$ is primary, then the same argument shows that $I_p$ is prime with $\dim(I_p)=\sum_{i=1}^d n_i - d$.
\end{remark}

\subsection{The Ideal \texorpdfstring{$I_2$}{} is a Real Reduced Prime}\label{section:I2-real-reduced}

Marshall \cite{Marshall2008PositiveSOS} gave a useful criterion for checking if a prime ideal is real reduced.

\begin{lemma}[Single-Point Criterion]
Let $I\subset\R[X]$ be a prime ideal. Then $I$ is real radical if and only if there exists one non-singular real point $y\in\Vcal_\R(I)$.
\end{lemma}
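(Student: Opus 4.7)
The plan is to prove both directions by exploiting the interplay between $\Vcal_\R(I)$ and $\Vcal_\C(I)$, with Zariski density of real points on the smooth locus as the linchpin.

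For the forward direction, assume $I$ is real radical. Since $I$ is prime (hence radical), Hilbert's Nullstellensatz gives $I(\Vcal_\C(I))=\sqrt{I}=I$, and since $I$ is real radical, the Real Nullstellensatz gives $I(\Vcal_\R(I))=\Rradical{I}=I$. Because the ideals of $\Vcal_\R(I)\subseteq\C^n$ and $\Vcal_\C(I)$ coincide, the Zariski closure of $\Vcal_\R(I)$ in $\C^n$ is exactly $\Vcal_\C(I)$. Primality of $I$ makes $\Vcal_\C(I)$ irreducible, so its singular locus is a proper Zariski-closed subset and the smooth locus $\Vcal_\C(I)^{\mathrm{sm}}$ is a nonempty Zariski-open subset. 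A Zariski-dense subset meets every nonempty Zariski-open one; hence $\Vcal_\R(I)\cap \Vcal_\C(I)^{\mathrm{sm}}\neq\emptyset$, producing a smooth real point.

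For the converse, let $y\in\Vcal_\R(I)$ be nonsingular and set $d=\dim I=\dim_\C\Vcal_\C(I)$. Nonsingularity means the Jacobian of a set of generators of $I$ at $y$ has rank $n-d$, so we may select $n-d$ polynomials $f_1,\dots,f_{n-d}\in I$ with $\R$-linearly independent gradients at $y$. The real and complex implicit function theorems then identify a Euclidean neighborhood of $y$ in $\Vcal_\R(I)$ with a real-analytic manifold $M$ of dimension $d$, sitting inside a complex-analytic chart of $\Vcal_\C(I)$ of complex dimension $d$ as a totally real submanifold. Given any $f\in\Rradical{I}=I(\Vcal_\R(I))$, the restriction of $f$ to this chart is a holomorphic function of $d$ complex variables vanishing on $M\cong\R^d$; the identity principle forces it to vanish on the entire chart. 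Irreducibility of $\Vcal_\C(I)$ then propagates the vanishing to all of $\Vcal_\C(I)$, and Hilbert's Nullstellensatz yields $f\in\sqrt{I}=I$. Hence $\Rradical{I}\subseteq I$, and the reverse inclusion is automatic.

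The main obstacle is the identity-principle step in the converse, namely justifying that vanishing on the maximal-dimensional totally real submanifold $M$ forces vanishing on an entire Euclidean neighborhood in $\Vcal_\C(I)$. This requires choosing local holomorphic coordinates in which $M$ corresponds to the real slice $\R^d\subset\C^d$, so that the standard fact "a holomorphic function on a connected open set in $\C^d$ vanishing on $\R^d$ is identically zero" applies. Once this local transfer is in place, passing from a Euclidean neighborhood to all of $\Vcal_\C(I)$ via irreducibility and then invoking the Nullstellensatz is routine.
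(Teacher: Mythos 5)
The paper does not actually prove this lemma---it cites Marshall---so there is no in-paper argument to compare against; your strategy (Zariski density of the real points for one direction, implicit function theorem plus the identity principle on a totally real slice for the other) is the standard proof of this fact. However, there is one genuine gap: in both directions you assert that primality of $I$ in $\R[X]$ makes $\Vcal_\C(I)$ irreducible. This implication is false. For example, $I=\langle x^2+y^2\rangle\subset\R[x,y]$ is prime, yet $I_\C=\langle x+iy\rangle\cap\langle x-iy\rangle$ and $\Vcal_\C(I)$ is a union of two lines. In general, for $I\subset\R[X]$ prime the complexification $I_\C$ is radical but need only be an intersection $P\cap\bar P$ of two complex-conjugate primes, so $\Vcal_\C(I)$ may have two irreducible components swapped by conjugation. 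Since irreducibility is load-bearing in your converse direction (it is what lets you propagate vanishing from one analytic chart to all of $\Vcal_\C(I)$ and then invoke the Nullstellensatz), the argument as written does not go through.

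Both directions can be repaired, but from the actual hypotheses rather than from primality alone. In the converse direction the nonsingular real point does the work: by the Jacobian criterion, $y$ is a smooth point of the reduced variety $\Vcal_\C(I_\C)$ and therefore lies on exactly one irreducible component; but a conjugation-fixed point must lie on both members of a conjugate pair of components, so $P=\bar P$ and $I_\C$ is prime. This same observation also justifies your compressed claim that a Euclidean neighborhood of $y$ in $\Vcal_\R(I)$ is the full $d$-dimensional real slice $M$: you need $\Vcal_\C(I)$ to \emph{coincide} locally with the smooth $d$-dimensional complex manifold cut out by $f_1,\dots,f_{n-d}$, not merely to sit inside it, and that is exactly what smoothness at $y$ plus the local dimension count delivers. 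In the forward direction irreducibility is not needed at all: the smooth locus of any reduced variety is dense open, so Zariski density of $\Vcal_\R(I)$ in $\Vcal_\C(I)$ (which you correctly derive from $I$ being real radical) already produces a real point in it; alternatively, density of the real points is itself incompatible with a conjugate-swapped pair of components, since every real point would then lie in their lower-dimensional intersection. With these corrections the proof is complete.
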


The non-singularity is checked by the Jacobian matrix.  
Let $I=\langle f_1,\dots,f_k\rangle\subseteq\R[x_1,\dots,x_n]$ and $y\in\Vcal_\R(I)$.  
The Jacobian of $I$ at $y$ is
\[
\mathrm{Jac}_y(I)=\Big[\frac{\partial f_i}{\partial x_j}(y)\Big]_{i,j}.
\]
The point $y$ is non-singular if $\operatorname{rank}(\mathrm{Jac}_y(I)) = n - \dim(I)$.
It suffices if the direction $\operatorname{rank}(\mathrm{Jac}_y(I)) \geq n - \dim(I)$ is satisfied.

\begin{theorem}
For any tensor size $\nbf=(n_1,\dots,n_d)\in\N^d$, the ideal $I_2$ is a real reduced prime.
\end{theorem}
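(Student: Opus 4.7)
The plan is to apply the single-point criterion. Since $I_2$ is prime with $\dim(I_2) = \sum_i n_i - d$ and the ambient polynomial ring has $n := \prod_i n_i$ variables, it suffices to exhibit one real point $y \in \Vcal_\R(I_2)$ at which $\operatorname{rank}(\mathrm{Jac}_y(I_2)) \geq n - \sum_i n_i + d$. The natural candidate is the base point $y := e_1 \otimes \cdots \otimes e_1$, i.e.\ $x_{(1,\dots,1)} = 1$ and $x_a = 0$ for $a \neq (1,\dots,1)$, which is a unit rank-one tensor and therefore lies in $\Vcal_\R(I_2)$. By the proposition in Section~\ref{section: construction-symmetry-I2}, the compact group $G = \SOgroup(n_1) \times \cdots \times \SOgroup(n_d)$ acts transitively on $\Vcal_\R(I_2)$, so non-singularity at any single point propagates to the entire real variety, and the choice of $y$ is therefore harmless.

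I next plan to evaluate the Jacobian at $y$ using the generating set from~\eqref{eq: nuclear-ideal}. The sphere generator $\sum_a x_a^2 - 1$ contributes a row with a single nonzero entry $2$ in column $(1,\dots,1)$. For a rank-one binomial $f_{a,b} := x_a x_b - x_{a\vee b} x_{a\wedge b}$ with $(a,b) \in \Scal$, a partial derivative at $y$ can be nonzero only when one of $a, b, a\vee b, a\wedge b$ equals $(1,\dots,1)$. Since $a$ and $b$ are incomparable by definition of $\Scal$, none of $a, b, a\vee b$ can equal $(1,\dots,1)$; only $a\wedge b = (1,\dots,1)$ is possible, in which case the row is, up to sign, the standard basis vector supported in column $a\vee b$.

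It then remains to describe the columns $c \in [\nbf]$ hit as $a\vee b$ by some incomparable pair $(a,b)$ with $a\wedge b = (1,\dots,1)$. I expect the answer to be exactly $\{c \in [\nbf] : |\{i : c_i > 1\}| \geq 2\}$: given such a $c$, choose any position $i_1$ with $c_{i_1} > 1$, let $a$ agree with $c$ at $i_1$ and be $1$ elsewhere, and let $b$ equal $1$ at $i_1$ and agree with $c$ elsewhere; then $a \vee b = c$, $a \wedge b = (1,\dots,1)$, and incomparability is witnessed by any second position $i_2$ with $c_{i_2} > 1$. Conversely, if $|\{i : c_i > 1\}| \leq 1$ then $a, b \leq c$ forces one of them to equal $(1,\dots,1)$, contradicting incomparability. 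A direct count gives $\prod_i n_i - 1 - \sum_i (n_i - 1) = \prod_i n_i - \sum_i n_i + d - 1$ such columns. Adding the distinct column $(1,\dots,1)$ from the sphere row, and noting that the contributing rows are linearly independent scalar multiples of standard basis vectors on distinct columns, I obtain $\operatorname{rank}(\mathrm{Jac}_y(I_2)) = \prod_i n_i - \sum_i n_i + d = n - \dim(I_2)$, exactly meeting the criterion.

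The only step requiring substantive work is the combinatorial characterization of which $c$ arise as $a\vee b$; the linear independence of the surviving rows is then immediate because each lives on its own column. The harder ingredients — primality of $I_2$, the dimension formula, and transitivity of the real group action — are already in hand from the previous subsections, so I foresee no further obstacles.
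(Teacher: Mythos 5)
Your proposal is correct and follows essentially the same route as the paper: both apply the single-point criterion at $y=e_1\otimes\cdots\otimes e_1$, extract the standard basis vector $e_{(1,\dots,1)}$ from the sphere generator and the vectors $e_c$ (for each $c$ with at least two coordinates $>1$) from rank-one binomials with $a\wedge b=(1,\dots,1)$, and match the resulting count $\prod_i n_i-\sum_i n_i+d$ against $n-\dim(I_2)$. The only cosmetic difference is that you enumerate the contributing Jacobian rows and characterize the columns they hit, while the paper starts from the target columns (the complement of $\Jcal'$) and exhibits one generator per column; the underlying binomials and the count are identical.
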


\begin{proof}
From Section~\ref{section:I2-prime}, we already know that $I_2$ is a prime ideal.  
Now we check the smoothness at the point $y=e_1\otimes\cdots\otimes e_1=(1,0,\dots,0)\in\Vcal_\R(I_2)$.  
Then the criterion above gives the claim.

We take the generators of $I_2$ from (\ref{eq: nuclear-ideal}).  
Define the index set
\[
\Jcal'=\Jcal\cup\{(1,\dots,1,n_d)\}\setminus\{(1,\dots,1)\},
\]
which means all indices with exactly one entry not equal to $1$.
For $a\notin\Jcal'$, we write $e_a:=e_{a_1}\otimes\cdots\otimes e_{a_d}$.
We show that all these $e_a$ appear in the row space of $\mathrm{Jac}_y(I_2)$.

If $a=(1,\dots,1)$, then the differential of $\sum_{b\in[\nbf]}x_b^2-1$ at $y$ gives the vector $e_{1\cdots1}$.  
If $a\notin\Jcal'$, then at least two entries of $a$ are not equal to $1$, for example $a_1,a_2\neq1$.  
Set $a'=(1,a_2,\dots,a_d)$.  
Then the binomial
\[
x_a x_{1\cdots1} - x_{a'}x_{a_1\,1\cdots1}\in I_0\subset I_2
\]
has derivative at $y$ equal to $e_a$.  
Therefore, every $e_a$ with $a\notin\Jcal'$ belongs to the row space of $\mathrm{Jac}_y(I_2)$.

We have
\[
\operatorname{rank}(\mathrm{Jac}_y(I_2)) \ge \#\{a\in[\nbf]: a\notin\Jcal'\} = \prod_{i=1}^d n_i - |\Jcal'|.
\]
From the previous section, $\dim(I_2)=|\Jcal|=\sum_{i=1}^d n_i - d$, and by definition $|\Jcal'|=|\Jcal|$.  
Hence
\[
\operatorname{rank}(\mathrm{Jac}_y(I_2)) = \prod_{i=1}^d n_i - \dim(I_2),
\]
so $y$ is non-singular.  
By the single-point criterion, $I_2$ is real radical, and since it is also prime, we conclude that $I_2$ is real reduced and prime.
\end{proof}

\begin{remark}
For other even $p=2s>0$, if $I_p$ is already known to be prime for a fixed tensor size, 
the same argument at the point $e_1\otimes\cdots\otimes e_1$ also proves that $I_p$ is real reduced.
\end{remark}

\textbf{Acknowledgements}:
The authors acknowledge funding by the Deutsche Forschungsgemeinschaft (DFG, German Research Foundation) - project number 442047500 - through the Collaborative Research Center ``Sparsity and Singular Structures'' (SFB 1481).

\bibliographystyle{alpha}
\bibliography{library}

\end{document}